\documentclass[a4paper,12pt]{article}

\usepackage[left=2cm,right=2cm, top=2cm,bottom=3cm,bindingoffset=0cm]{geometry}

\usepackage{verbatim}
\usepackage{amsmath}
\usepackage{amsthm}
\usepackage{amssymb}
\usepackage{delarray}
\usepackage{cite}
\usepackage{hyperref}
\usepackage{mathrsfs}
\usepackage{tikz}
\usetikzlibrary{patterns}
\usepackage{caption}
\DeclareCaptionLabelSeparator{dot}{.}
\newcommand{\al}{\alpha}
\newcommand{\be}{\beta}

\newcommand{\de}{\delta}
\newcommand{\la}{\lambda}
\newcommand{\om}{\omega}

\newcommand{\eps}{\varepsilon}

\theoremstyle{plain}

\numberwithin{equation}{section}

\newtheorem{thm}{Theorem}[section]
\newtheorem{lem}[thm]{Lemma}
\newtheorem{prop}[thm]{Proposition}
\newtheorem{cor}[thm]{Corollary}

\theoremstyle{definition}

\newtheorem{ip}[thm]{Inverse Problem}

\theoremstyle{remark}

\newtheorem{remark}[thm]{Remark}

\DeclareMathOperator*{\Res}{Res}

 \allowdisplaybreaks

\begin{document}

\begin{center}
{\Large\bf Third-order inverse spectral problem \\[0.2cm]
with the three-point boundary conditions}
\\[0.5cm]
{\bf Natalia P. Bondarenko}$^{1,2,3}$\\[0.1cm]

\textit{email:} bondarenkonp@sgu.ru\\[0.2cm]

1. Department of Mechanics and Mathematics, Saratov State University, 
Astrakhanskaya 83, Saratov 410012, Russia, \\

2. Department of Applied Mathematics, Samara National Research University, Moskovskoye Shosse 34, Samara 443086, Russia, \\

3. S.M. Nikolskii Mathematical Institute, RUDN University, 6 Miklukho-Maklaya St, Moscow, 117198, Russia.
\end{center}

\vspace{0.5cm}

{\bf Abstract.} In this paper, we study a new inverse spectral problem that consists in the recovery of the third-order differential equation from two spectra corresponding to the boundary conditions 
$y(0) = y(1) = y(2) = 0$ and $y(0) = y'(0) = y(1) = 0$. The uniqueness and existence theorems for the solution are obtained. To prove the results, we treat the inverse problem using a general approach that reconstructs higher-order differential operators from the Weyl-Yurko matrix.

\medskip

{\bf Keywords:} third-order differential operators; inverse spectral problems; three-point Dirichlet spectrum; distribution coefficient; uniqueness; existence. 

\medskip

{\bf AMS Mathematics Subject Classification (2020):} 34A55 34B09 34B10 34E05 46F10  

\vspace{1cm}

\section{Introduction} \label{sec:intr}

Consider the third-order differential equation
\begin{equation} \label{eqv}
y''' + (p y)' + p y' + \mathrm{i} q y = \la y, \quad x \in (0, 2),
\end{equation}
where $p \in L_1(0, 1)$ and $q \in W_1^{-1}(0, 1)$ are real-valued $1$-periodic functions, $\la$ is the spectral parameter. Equation \eqref{eqv} with distribution coefficient $q$ is understood in the sense of Mirzoev-Shkalikov regularization \cite{MS16, MS16-talk, MS19} (see Section~\ref{sec:prelim} for details).

Denote by $\{ \la_n \}_{n \ge 1}$ the eigenvalues of the boundary value problem $\mathcal L = \mathcal L(p, q)$ for equation \eqref{eqv} on the interval $(0, 1)$ with the boundary conditions
\begin{equation} \label{bc1}
y(0) = y'(0) = 0, \quad y(1) = 0,
\end{equation}
and by $\{ \mu_n \}_{n \in \mathbb Z_0}$ ($\mathbb Z_0 := \mathbb Z \setminus \{ 0 \}$) the eigenvalues of the boundary value problem $\mathcal M = \mathcal M(p, q)$ for equation \eqref{eqv} with the three-point Dirichlet boundary conditions
\begin{equation} \label{bc2}
y(0) = y(1) = y(2) = 0.
\end{equation}

In this paper, we obtain the uniqueness theorem and solvability conditions for the following inverse spectral problem:

\begin{ip} \label{ip:1}
Given the spectra $\{ \la_n \}_{n \ge 1}$ and $\{ \mu_n \}_{n \in \mathbb Z_0}$, find $p$ and $q$.
\end{ip}

Inverse problems of spectral analysis have been most comprehensively studied for second-order differential operators (see the monographs \cite{Mar77, Lev84, FY01, Krav20} and references therein). Differential operators of higher orders are significantly more difficult to investigate, because the famous method of Gelfand and Levitan \cite{GL51}, based on transformation operators, does not work for them. 

The most effective approach to higher-order inverse problems has been developed by Yurko \cite{Yur92, Yur00, Yur02}. Specifically, he was the first to introduce the spectral characteristic (the so-called Weyl-Yurko matrix) that uniquely determines the coefficients of a linear differential equation of any integer order $n \ge 2$ for any behavior of the spectrum. Furthermore, Yurko reduced the corresponding nonlinear inverse problems to linear equations in suitable Banach spaces by using the idea of contour integration of spectral mappings from \cite{Leib71}.
As a result, procedures for constructive solutions were developed and necessary and sufficient conditions of solvability were obtained for higher-order inverse problems with integrable coefficients on a finite interval and on the half-line (see \cite{Yur02}). Recently, the method of Yurko was generalized to differential operators with distribution coefficients \cite{Bond21, Bond22-alg, Bond24, GYB25}. Among the other approaches to higher orders, we mention the studies \cite{Sakh61, Khach83, Mal94}, based on transformation operators under specific restrictions on differential equation coefficients, and the approach of Beals et al \cite{Beals88} to inverse scattering on the line.

The third-order inverse problems have applications to integration of the nonlinear Boussinesq equation \cite{McK81, DTT82, Jur91, BK24}. Reconstruction of the third-order differential operators from various types of spectral and scattering data was investigated in \cite{Bond23, ATU25, Zol25, BK26} and other studies. In particular, Badanin and Korotyaev in the recent paper \cite{BK26} solved the inverse spectral problem with the three-point Dirichlet conditions \eqref{bc2} by using the eigenvalues $\{ \mu_n \}_{n \in \mathbb Z_0}$ and the corresponding norming constants. However, their problem statement and the results are valid only locally, for sufficiently small norms of the coefficients $p$ and $q$.

In this paper, we introduce the new inverse problem statement (Inverse Problem~\ref{ip:1}) for the third-order differential equation \eqref{eqv}. Namely, we consider the three-point spectrum $\{ \mu_n \}_{n \in \mathbb Z_0}$ together with such data (the second spectrum $\{ \la_n \}_{n \ge 1}$) that guaranty the global uniqueness of the inverse problem solution. Furthermore, we find sufficient conditions, close to the necessary ones, for the existence of a solution.

Let us formulate the main results. Along with~\eqref{eqv}, we consider another equation of the same form but with different coefficients $\tilde p$ and $\tilde q$. We agree that, if a symbol $\al$ denotes an object related to $(p, q)$, then the symbol $\tilde \al$ with tilde will denote the analogous object related to $(\tilde p, \tilde q)$.

\begin{thm} \label{thm:uniq}
Suppose that $\la_n \ne -\overline{\la_k}$ for $n, k \ge 1$. Then the equality of the spectra $\la_n = \tilde \la_n$ $(n \ge 1)$ and $\mu_n = \tilde \mu_n$ $(n \in \mathbb Z_0)$ yields $p = \tilde p$ in $L_1(0, 1)$ and $q = \tilde q$ in $W_1^{-1}(0, 1)$.
\end{thm}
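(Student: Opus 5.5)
We reduce Inverse Problem~\ref{ip:1} to the recovery of the Weyl--Yurko matrix of a third-order problem on $(0,1)$, and then use the known uniqueness theorem for that matrix in the distributional setting \cite{Bond21, Bond22-alg}. Since $p$ and $q$ are $1$-periodic, the three-point problem \eqref{bc2} on $(0,2)$ can be written through the solutions on the single period $(0,1)$. Let $\{C_k(x,\la)\}_{k=1}^3$ be the fundamental system of \eqref{eqv} with $C_k^{[j-1]}(0,\la)=\delta_{jk}$ (quasi-derivatives in the Mirzoev--Shkalikov sense), and let $\mathcal C(\la) = [C_k^{[j-1]}(1,\la)]_{j,k=1}^3$ be the monodromy matrix. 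By periodicity, the values at $x=2$ are obtained by applying $\mathcal C(\la)$ twice. A solution with $y(0)=0$ has the form $y = aC_2 + bC_3$, and the conditions $y(1)=y(2)=0$ give a $2\times 2$ determinant. Hence the characteristic function of $\mathcal M$ is
\[
\Delta_{\mathcal M}(\la) = C_2(1,\la)\,(\mathcal C^2)_{13}(\la) - C_3(1,\la)\,(\mathcal C^2)_{12}(\la).
\]
The characteristic function of $\mathcal L$ is simply $\Delta_{\mathcal L}(\la)=C_3(1,\la)$.

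The first step is to show that each spectrum determines its characteristic function uniquely. We would derive the asymptotics of $C_k(1,\la)$ with $\la=\rho^3$, using the standard sectorial estimates for third-order equations. These show that $\Delta_{\mathcal L}$ and $\Delta_{\mathcal M}$ are entire functions of order $1/3$ in $\la$ with known leading asymptotics. Hadamard factorization then recovers them from their zeros, so $\Delta_{\mathcal L}=\tilde\Delta_{\mathcal L}$ and $\Delta_{\mathcal M}=\tilde\Delta_{\mathcal M}$.

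The second step extracts more entries of $\mathcal C$ from these two functions. Expanding $(\mathcal C^2)_{1k}=\sum_j C_{1j}C_{jk}$ gives
\[
\Delta_{\mathcal M} = C_{12}(C_{11}C_{13}+C_{12}C_{23}+C_{13}C_{33}) - C_{13}(C_{11}C_{12}+C_{12}C_{22}+C_{13}C_{32}),
\]
where $C_{jk}=C_k^{[j-1]}(1,\la)$. The terms with $C_{11}$ cancel, so $\Delta_{\mathcal M} = C_{12}^2C_{23} + C_{12}C_{13}(C_{33}-C_{22}) - C_{13}^2C_{32}$. At the zeros $\la_n$ of $C_{13}$ this leaves $\Delta_{\mathcal M}(\la_n) = C_{12}(1,\la_n)^2\, C_{23}(1,\la_n)$.

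To exploit this we need the symmetry of equation \eqref{eqv}. Since $p,q$ are real, the formal adjoint of $\ell y = y'''+(py)'+py'+\mathrm i qy$ is $-\ell$ up to conjugation of $\la$. This links the monodromy matrix at $\la$ with the inverse transpose of the monodromy matrix at $-\bar\la$ via a fixed Wronskian-type matrix $J$, with $\det\mathcal C=1$. Under this relation $C_{23}(1,\la)$ and $C_{12}(1,\la)$ are tied to the minors entering the Weyl--Yurko matrix. The Weyl--Yurko matrix for the Dirichlet-type conditions on $(0,1)$ consists of ratios such as $-C_{12}/C_{13}$ and ratios of $2\times2$ minors. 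Its poles are $\{\la_n\}$ together with the zeros of a minor, which by the adjoint symmetry are $\{-\overline{\la_n}\}$.

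The hypothesis $\la_n\ne -\overline{\la_k}$ is what makes these two pole sets disjoint. The residues at $\la_n$ are then determined by the known values $\Delta_{\mathcal M}(\la_n)$, via the relation above combined with the adjoint identity. This is the main obstacle: we need an identity expressing $C_{12}(1,\la_n)^2C_{23}(1,\la_n)$ through the single residue quantity $C_{12}(1,\la_n)$, up to known factors coming from $C_{13}(1,-\bar\la)$. We would first try to use the minor identity $C_{12}C_{23}-C_{13}C_{22} = \overline{C_{31}(1,-\bar\la)}$ (up to sign), which reduces at $\la_n$ to $C_{12}C_{23}$. With $\overline{C_{\cdot}(1,-\bar\la)}$ built from the same fundamental system, this should give a polynomial relation that determines the residue uniquely, possibly up to a sign resolved by asymptotics. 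Multiple zeros would be handled by the same identity applied to Taylor coefficients.

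Once the poles and principal parts are known, the Weyl--Yurko matrices $M(\la)$ and $\tilde M(\la)$ coincide. This follows by a Liouville argument: the difference $M-\tilde M$ is entire and decays along suitable rays, so it vanishes. The uniqueness theorem for the Weyl--Yurko matrix with distribution coefficients then gives $p=\tilde p$ in $L_1(0,1)$ and $q=\tilde q$ in $W_1^{-1}(0,1)$.
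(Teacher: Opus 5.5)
Your outline matches the paper's: recover $\de$ and $\Delta$ from their zeros, express $\Delta$ through the entries $C_{jk}$, use the adjoint symmetry to pin down $C_2(1,\la)$ on $\{\la_n\}$, then a vanishing argument and a Weyl--Yurko uniqueness theorem. However, the step you call the main obstacle is left open, and the identity you propose for it is mis-indexed. With your convention $C_{jk}=C_k^{[j-1]}(1,\la)$, your right-hand side $\overline{C_{31}(1,-\overline{\la})}$ is $\overline{C_1^{[2]}(1,-\overline{\la})}$. The two spectra do not determine this quantity, so it cannot close the argument. The correct statement comes from the constancy of the Lagrange bracket $Z^{T}JY$, where $J$ is the anti-diagonal matrix with entries $1,-1,1$. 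This gives $\mathcal C(\la)^{-1}=J\,\overline{\mathcal C(-\overline{\la})}^{\,T}J$. Comparing the $(1,3)$ and $(1,2)$ entries with the cofactor formula for $\mathcal C^{-1}$ (recall $\det\mathcal C=1$) yields
\[
C_{12}C_{23}-C_{13}C_{22}=\overline{C_3(1,-\overline{\la})}=\de^{\star}(\la),\qquad C_{12}C_{33}-C_{13}C_{32}={C_3^{\star}}'(1,\la).
\]
Your expansion then regroups into the global identity $\Delta(\la)=C_2(1,\la)\,\de^{\star}(\la)+\de(\la)\,{C_3^{\star}}'(1,\la)$, which is the paper's \eqref{Delta2}. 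This identity is linear in $C_2(1,\la)$, so there is no polynomial relation or sign to resolve. Since $\de^{\star}(\la_n)\ne0$ by hypothesis, it also shows directly that $(C_2-\tilde C_2)(1,\la)$ vanishes at every $\la_n$ with at least the multiplicity of $\de$. Multiple zeros therefore need no separate Taylor-coefficient argument.

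The final step overclaims. You obtain only $M_{3,2}=\tilde M_{3,2}$, and hence $M_{2,1}=M_{3,2}^{\star}=\tilde M_{2,1}$. Nothing in your argument determines $M_{3,1}$: the symmetry gives only $M_{3,1}+M_{3,1}^{\star}=M_{2,1}M_{3,2}$. So a uniqueness theorem that requires the whole Weyl--Yurko matrix does not apply as stated. What you need is the result that $M_{2,1}$ and $M_{3,2}$ alone determine $p$ and $q$ when $\Delta_{1,1}$ and $\Delta_{2,2}$ have no common zeros (Proposition~\ref{prop:Weyl}, from \cite{GYB25}). That hypothesis is exactly $\la_n\ne-\overline{\la_k}$; this is the second place the assumption enters, besides $\de^{\star}(\la_n)\ne0$. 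Also, the vanishing step is not a plain Liouville argument, because $M_{3,2}$ itself grows like $\rho$. You must use the Birkhoff asymptotics to show that $g=(C_2-\tilde C_2)(1,\cdot)/\de=\tilde M_{3,2}-M_{3,2}$ tends to zero along $\arg\rho=\pi/6$. Then $g\equiv0$ follows because $g$ is entire of order at most $1/3<1/2$ (Proposition~\ref{prop:PL}).
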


Next, we prove the following existence theorem for solution of Inverse Problem~\ref{ip:1}.

\begin{thm} \label{thm:sc}
Let $\{ \la_n \}_{n \ge 1}$ and $\{ \mu_n \}_{n \in \mathbb Z_0}$ be any complex numbers satisfying the following conditions:
\begin{enumerate}
\item $\la_n \ne \la_k$ $(n \ne k)$ and $\mbox{Re} \, \la_n < 0$, $n \ge 1$.
\item The asymptotic relations hold: 
\begin{align} \label{asymptla}
\la_n & = -\bigl( \nu(n + 1/6)\bigr)^3 + 2 p_0 \nu (n + 1/6) + n \varkappa_n, \quad n \ge 1; \\ \label{asymptmu} 
\mu_n & = (\nu n)^3 - 2 p_0 \nu n + n \kappa_n, \quad n \in \mathbb Z_0, 
\end{align}
where $\nu := \dfrac{2 \pi}{\sqrt 3}$, $p_0 \in \mathbb R$, $\{ \varkappa_n \}_{n \ge 1} \in l_2$, and $\{ \ln(|n|) \kappa_n \}_{n \in \mathbb Z_0} \in l_2$.
\item For all $n \ge 1$, there holds
\begin{equation} \label{reld}
\delta^3(-\overline{\la_n}) = \overline{\Delta(\la_n)} \Delta(-\overline{\la_n}) 
\end{equation}
where 
\begin{equation} \label{prod}
\delta(\la) = \frac{1}{2} \prod_{n = 1}^{\infty} \frac{\la_n - \la}{\la_n^0}, \quad \Delta(\la) = \prod_{n \in \mathbb Z_0} \frac{\mu_n - \la}{\mu_n^0},
\end{equation}
and $\{ \la_n^0 \}_{n \ge 1}$, $\{ \mu_n^0 \}_{n \in \mathbb Z_0}$ are the spectra of the boundary value problems with the zero coefficients: $\mathcal L_0 := \mathcal L(0, 0)$, $\mathcal M_0 := \mathcal M(0,0)$.
\end{enumerate}
Then, there exist unique real-valued functions $p \in L_2(0, 1)$ and $q \in W_2^{-1}(0,1)$ such that $\{ \la_n \}_{n \ge 1}$ and $\{ \mu_n \}_{n \in \mathbb Z_0}$ are the eigenvalues of $\mathcal L(p, q)$ and $\mathcal M(p, q)$, respectively, and $p_0 = \int_0^1 p(x) \, dx$.

If refined asymptotic relations hold:
\begin{align} \label{asymptla+}
\la_n & = -\bigl( \nu(n + 1/6)\bigr)^3 + 2 p_0 \nu (n + 1/6) + \mathrm{i} q_0 - 2 p_1 + \varkappa_n, \quad n \ge 1; \\ \label{asymptmu+} 
\mu_n & = (\nu n)^3 - 2 p_0 \nu n + \mathrm{i} q_0 + \kappa_n, \quad n \in \mathbb Z_0, 
\end{align}
where $p_1$ and $q_0$ are real, $\{ \varkappa_n \}_{n \ge 1} \in l_2$ and $\{ \ln(|n|) \kappa_n \}_{n \in \mathbb Z_0} \in l_2$, then $p \in W_2^1[0,1]$, $q \in L_2(0,1)$, $p_1 = p(0) = p(1)$, and $q_0 = \int_0^1 q(x) \, dx$.
\end{thm}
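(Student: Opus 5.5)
The natural route is to reduce Inverse Problem~\ref{ip:1} with data on $(0,2)$ to an inverse problem for a third-order operator on the single interval $(0,1)$. We can then invoke the general Weyl--Yurko approach for distribution coefficients and its existence results from \cite{Bond21, Bond22-alg, Bond24}. Periodicity of $p,q$ and the shift $x \mapsto x+1$ let us express solutions on $(1,2)$ through the monodromy (transfer) matrix $\Phi(1,\la)$ of the equation on $(0,1)$. Concretely, take the fundamental system normalized at $x=0$. The condition $y(0)=0$ leaves a two-dimensional family, and imposing $y(1)=y(2)=0$, with $y(2)$ computed through $\Phi(1,\la)^2$, gives $\Delta(\la)$ as an explicit polynomial in the entries of $\Phi(1,\la)$.

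\textbf{Step 1: characteristic functions.} The characteristic function of $\mathcal L$ is the single entry $\delta(\la)$, i.e.\ the $(1,3)$-type entry of the monodromy matrix, corresponding to solutions with $y(0)=y'(0)=0$. I expect $\Delta(\la)$ to factor as a combination of $\delta(\la)$, the entries of $\Phi(1,\la)$, and a second Dirichlet-type entry $\eta(\la)$. The form $\delta^3(-\bar\la)=\overline{\Delta(\la)}\Delta(-\bar\la)$ suggests using the adjoint equation. Since $p,q$ are real and $q$ enters as $\mathrm{i}q$, the formally adjoint equation is the same equation with $\la$ replaced by $-\bar\la$, up to conjugation. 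This yields a Wronskian/determinant identity tying $\Delta(\la)$ to $\Delta(-\bar\la)$ and $\delta$.

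\textbf{Step 2: uniqueness (Theorem~\ref{thm:uniq}).} By Hadamard factorization together with the known asymptotics, the spectra determine $\delta$ and $\Delta$. From these I will recover the Weyl--Yurko matrix $M(\la)$ of the problem on $(0,1)$: its entries are ratios of minors of $\Phi(1,\la)$ with denominators $\delta(\la)$ and a second determinant. The task is to show that the numerator entries are determined at the zeros $\la_n$ of $\delta$. At $\la=\la_n$ the identity from Step~1 should collapse, making $\Delta(\la_n)$ a product of two unknown entries. Combining it with the relation at $-\bar\la_n$, which by hypothesis is not a zero of $\delta$, yields an interpolation of a missing entry at the points $\la_n$. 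That entry is an entire function of known growth, so it is fixed by its interpolation values plus asymptotics. Yurko's uniqueness theorem from the Weyl--Yurko matrix then gives $p=\tilde p$, $q=\tilde q$.

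\textbf{Step 3: existence (Theorem~\ref{thm:sc}).} Given data satisfying conditions 1--3, I build $\delta,\Delta$ by the products~\eqref{prod} and use~\eqref{reld} to construct the missing entire functions by Lagrange-type interpolation series. Then I form a candidate Weyl--Yurko matrix and verify the necessary and sufficient conditions of the existing existence theorem for distribution coefficients: asymptotics in $l_2$-type classes, simplicity of the relevant poles (condition~1), and solvability of the main equation. Solvability should follow from a self-adjointness argument, using $\mathrm{Re}\,\la_n<0$, reality, and~\eqref{reld}, which makes the main equation's operator have trivial kernel. The regularity class $p\in L_2$, $q\in W_2^{-1}$ comes from matching the asymptotics~\eqref{asymptla}--\eqref{asymptmu}, with the $\ln|n|$ weight absorbing the logarithmic loss in interpolation. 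The refined asymptotics give $W_2^1$ and $L_2$ by the same correspondence one order higher.

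The main obstacle I expect is Step~3: proving that the constructed functions genuinely come from a periodic coefficient pair, so that $\Delta$ is consistent with $\Phi(1,\la)^2$. This is precisely where~\eqref{reld} must be used. I would attempt it by proving the trivial-kernel property of the main equation via contour integration against the adjoint solution, where the sign condition $\mathrm{Re}\,\la_n<0$ makes the relevant quadratic form definite.
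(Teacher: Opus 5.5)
There is a genuine gap: the two steps that make up the existence proof are either left as a placeholder or handed to the wrong tool. In Step~3 you propose to rebuild a Weyl--Yurko matrix by interpolation and then prove unique solvability of the main equation through a ``definite quadratic form'' built from $\mathrm{Re}\,\la_n<0$ and \eqref{reld}. No such form is exhibited, and the problem $\mathcal L$ is not self-adjoint, so this is not an argument.

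The missing idea is that only $M_{3,2}$ is needed, since $M_{2,1}=M_{3,2}^{\star}$ for real coefficients. By \eqref{sm3}, its residues are computable from the data: set $\be_n:=\Delta(\la_n)/(\dot\de(\la_n)\de^{\star}(\la_n))$ as in \eqref{findbe}. For $\{\la_n,\be_n\}_{n\ge1}$ an existence theorem with checkable hypotheses is already available (Proposition~\ref{prop:sc}), and \eqref{reld} plays no role in its solvability. What must be shown is:
\begin{itemize}
\item $\be_n$ is finite and nonzero. Here $\mathrm{Re}\,\la_n<0$ gives $\de^{\star}(\la_n)\ne0$, and then \eqref{reld} forces $\Delta(\la_n)\ne0$.
\item $\be_n$ satisfies \eqref{asymptbe}, which your plan does not touch. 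First compare $\Delta(\la_k)$ with $\tilde\Delta(\la_k)$ for $p\equiv p_0$, $q\equiv0$ via the product over $\{\mu_n\}$. This is exactly where the weight $\ln|n|$ enters, since the matrix $[1/(|n+k|+1)]$ alone does not define a bounded operator on $l_2$. Then compare with $\tilde{\tilde\Delta}(\la_k)$ for an auxiliary problem with the same $\la_n$ (e.g.\ spectral data $\{\la_n,3\la_n\}$), using $d(\rho_k)=0$ to eliminate $\exp(\rho_k(\om_2-\om_3))$. This yields $\be_n=\tilde{\tilde\be}_n(1+\eps_n/n)$.
\end{itemize}

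Second, the obstacle you single out --- that the given $\{\mu_n\}$ really are the eigenvalues of $\mathcal M(p,q)$ --- cannot be settled by a trivial kernel of the main equation. The recovered $(p,q)$ depends only on $\{\la_n,\be_n\}$, i.e.\ on the values $\Delta(\la_n)$, and these do not determine $\Delta$. This is where \eqref{reld} is essential. Let $\Delta^{\diamond}$ be the characteristic function of $\mathcal M(p,q)$. By construction of $\be_n$ one has $\Delta(\la_n)=\Delta^{\diamond}(\la_n)\ne0$. Lemma~\ref{lem:reld} applied to $(p,q)$, compared with \eqref{reld}, then gives $\Delta(-\overline{\la_n})=\Delta^{\diamond}(-\overline{\la_n})$. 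Hence $f=(\Delta-\Delta^{\diamond})/(\de\de^{\star})$ is entire of order at most $1/3$.

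Along $\arg\rho=\pi/6$, Lemma~\ref{lem:asymptDelta} gives $\de\de^{\star}\asymp\rho^{-1}\Delta^{\diamond}$. So you need the sharp estimate $\Delta-\Delta^{\diamond}=\Delta^{\diamond}\,o(\rho^{-1})$, which is Lemma~\ref{lem:asymptDelta2}; it uses that both sequences have the same $p_0$ and $\{|n|^{-1}(\mu_n-\mu_n^{\diamond})\}\in l_2$. Then $f\to0$ on that ray, and Proposition~\ref{prop:PL} yields $\Delta\equiv\Delta^{\diamond}$. Without this two-node interpolation argument, your construction never connects the given three-point spectrum to the recovered coefficients.
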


We emphasize that Theorem~\ref{thm:sc} requires neither the reality of $\{ \la_n \}_{n \ge 1}$ and $\{ \mu_n \}_{n \in \mathbb Z_0}$ nor the simplicity of $\{ \mu_n \}_{n \in \mathbb Z_0}$. Moreover, we will show that the asymptotics \eqref{asymptla}--\eqref{asymptmu} with $\{ \varkappa_n \} \in l_2$ and $\{ \kappa_n \} \in l_2$, as well as the relation \eqref{reld} hold for the spectra $\{ \la_n \}_{n \ge 1}$ and $\{ \mu_n \}_{n \in \mathbb Z_0}$ of any real-valued functions $p \in L_2(0,1)$ and $q \in W_2^{-1}(0,1)$.

The proofs of the main theorems are based on the general inverse problem theory for higher-order differential operators \cite{Yur02}, including the ones with distribution coefficients \cite{Bond21, GYB25, Bond23}. We establish the relation between the given spectra $\{ \la_n \}_{n \ge 1}$, $\{ \mu_n \}_{n \in \mathbb Z_0}$ and the elements of the Weyl-Yurko matrix, which leads to the uniqueness theorem. In order to prove the existence theorem, we reduce Inverse Problem~\ref{ip:1} to the recovery of $p$ and $q$ from the spectral data $\{ \la_n, \be_n \}_{n \ge 1}$ introduced for the third-order problem $\mathcal L$ in \cite{Bond23}.
We find asymptotical and structural assumptions on $\{ \la_n \}_{n \ge 1}$ and $\{ \mu_n \}_{n \in \mathbb Z_0}$ that are sufficient for $\{ \la_n, \be_n \}_{n \ge 1}$ to satisfy the inverse problem solvability conditions from \cite{Bond23}.
Our technique is based on recovering entire characteristic functions from their zeros by Hadamard's Factorization Theorem and on the asymptotical analysis using the Birkhoff solutions, which were recently constructed for differential equations with distribution coefficients in \cite{SS20}.

The paper is organized as follows. In Section~\ref{sec:prelim}, we present the Mirzoev-Shkalikov regularization and basic results of the inverse spectral theory for equation \eqref{eqv}, as well as other preliminaries. In Section~\ref{sec:asympt}, the necessary asymptotics of the eigenvalues $\{ \mu_n \}_{n \in \mathbb Z_0}$ and of some characteristic functions are obtained. In Sections~\ref{sec:uniq} and~\ref{sec:exist}, we prove Theorems~\ref{thm:uniq} and~\ref{thm:sc}, respectively.

\smallskip

Throughout the paper, we use the notations:
\begin{itemize}
\item The prime $y'(x, \la)$ denotes the derivative with respect to $x$ and the dot $\dot y(x, \la)$, with respect to $\la$.
\item $\de_{k,j}$ is the Kronecker delta.
\item $f^{\star}(\la) = \overline{f(-\overline{\la})}$.
\item In estimates, the same symbol $C$ denotes various positive constants.
\end{itemize}

\section{Preliminaries} \label{sec:prelim}

Let us begin from the regularization of equation \eqref{eqv} in terms of quasi-derivatives (see \cite{MS16-talk, MS19, Bond23}). 

The claim $q \in W_s^{-1}(0, 1)$ ($s = 1, 2$) means that $q = \sigma'$ in the sense of distributions, where $\sigma \in L_s(0, 1)$. The $1$-periodicity of $p$ and $q$ means that
$p(x + 1) := p(x)$ and $\sigma(x + 1) := \sigma(x) + c$ a.e. on $(0, 1)$, where $c$ can be any fixed constant.

Introduce the matrix function $F(x) = [f_{k,j}(x)]_{k,j = 1}^3$ associated with equation \eqref{eqv}:
$$
F(x) = \begin{bmatrix}
0 & 1 & 0 \\
-(p + \mathrm{i} \sigma) & 0 & 1 \\
0 & (-p + \mathrm{i} \sigma) & 0 
\end{bmatrix},
$$
where $\sigma$ is a fixed real-valued antiderivative of $q$. Although the choice of $\sigma$ is non-unique, it does not influence the further arguments.

Introduce the quasi-derivatives in a standard way:
\begin{equation} \label{quasi}
y^{[0]} := y, \quad y^{[k]} := (y^{[k-1]})' - \sum_{j = 1}^k f_{k,j} y^{[j-1]}, \quad k = 1, 2, 3.
\end{equation}

Clearly, we have
$$
y^{[j]} = y^{(j)}, \quad j = 0, 1, \quad y^{[2]} = y'' + (p + \mathrm{i} \sigma) y, \quad
y^{[3]} = (y^{[2]})' + (p - \mathrm{i} \sigma) y'.
$$

Introduce the domain
$$
D_F := \bigl\{ y \colon y^{[k]} \in AC[0,2], \, k = 0, 1, 2 \bigr\}.
$$

According to \cite{MS19}, for any $y \in D_F$, the differential expression $y''' + (p y)' + p y' + \mathrm{i} q y$ produces a regular generalized function, which equals $y^{[3]}$. Thus, we call $y$ a solution of \eqref{eqv} if $y \in D_F$ and $y^{[3]} = \la y$ a.e. on $(0, 2)$.

Next, we present the results on inverse spectral problems for equation \eqref{eqv} basing on \cite{Bond21, Bond23}.
For $k = 1, 2, 3$, denote by $C_k(x, \la)$ and $\Phi_k(x, \la)$ the solutions of equation \eqref{eqv} satisfying the corresponding initial conditions
\begin{equation} \label{initC}
C_k^{[j-1]}(0, \la) = \de_{k,j}, \quad k,j = 1, 2, 3,
\end{equation}
and the boundary conditions
$$
\Phi_k^{[j-1]}(0, \la) = \de_{k,j}, \quad j = \overline{1,k}, \qquad
\Phi_k^{[s-1]}(1, \la) = 0, \quad s = \overline{1,3-k}.
$$

Following \cite{Bond21, Bond23}, introduce the Weyl-Yurko matrix $M(\la) := [M_{j,k}(\la)]_{j,k = 1}^3$,
$M_{j,k}(\la) := \Phi_k^{[j-1]}(0,\la)$. Obviously, the Weyl-Yurko matrix has lower-triangular structure:
$$
M(\la) = \begin{bmatrix}
1 & 0 & 0 \\
M_{2,1}(\la) & 1 & 0 \\
M_{3,1}(\la) & M_{3,2}(\la) & 1
\end{bmatrix}. 
$$
Due to \cite{Bond21, Bond23}, the lower-diagonal elements are represented in the form 
\begin{equation} \label{relMjk}
M_{j,k}(\la) = -\frac{\Delta_{j,k}(\la)}{\Delta_{k,k}(\la)}, \quad 1 \le k < j \le 3,
\end{equation}
where 
\begin{equation} \label{Deltakk}
\Delta_{k,k}(\la) := \det\bigl( [C_j^{[3-s]}(1,\la)]_{s,j = k+1}^3\bigr) 
\end{equation}
and $\Delta_{j,k}(\la)$ is obtained from $\Delta_{k,k}(\la)$ by replacing $C_j$ by $C_k$.

For each fixed $x \in [0, 2]$ and $k, j = 1, 2, 3$, the functions $C_k^{[j-1]}(x, \la)$ are entire in $\la$ of order not greater than $1/3$, and so do $\Delta_{j,k}(\la)$. Hence, the elements $M_{j,k}(\la)$ of the Weyl-Yurko matrix are meromorphic in $\la$, and their poles belong to the zero set of the corresponding function $\Delta_{k,k}(\la)$ ($k = 1, 2$). By construction of $\Delta_{k,k}(\la)$, its zeros coincide with the eigenvalues (counting with multiplicities) of the boundary value problem for equation \eqref{eqv} with the corresponding conditions
$$
y^{[j-1]}(0) = 0, \quad j = \overline{1,k}, \qquad
y^{[s-1]}(1) = 0, \quad s = \overline{1,3-k}.
$$
In particular, the zeros of $\Delta_{2,2}(\la) = C_3(1, \la)$ are $\{ \la_n \}_{n \ge 1}$

In order to prove Theorem~\ref{thm:uniq}, we will need the following uniqueness result from \cite{GYB25}, which generalizes \cite[Theorem~2.5.1]{Yur02} to differential operators with distribution coefficients.

\begin{prop}[\hspace*{-3pt}\cite{GYB25}, Theorem~4.1] \label{prop:Weyl}
Suppose that $\Delta_{1,1}(\la)$ and $\Delta_{2,2}(\la)$ have no common zeros, and so do $\tilde \Delta_{1,1}(\la)$ and $\tilde \Delta_{2,2}(\la)$. Then the assumption $M_{k+1,k}(\la) \equiv \tilde M_{k+1, k}(\la)$ for $k = 1, 2$ implies $p = \tilde p$ in $L_1(0,1)$ and $q = \tilde q$ in $W_1^{-1}(0,1)$.
In other words, under the hypothesis of this proposition, the elements $M_{2,1}(\la)$ and $M_{3,2}(\la)$ of the Weyl-Yurko matrix uniquely specify the coefficients $p$ and $q$ of equation \eqref{eqv}.
\end{prop}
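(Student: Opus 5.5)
The plan is to reduce the statement to the classical uniqueness argument of Yurko (the method of spectral mappings), adapted to quasi-derivatives. The proof has two stages. First, recover the full Weyl--Yurko matrix $M(\la)$, that is, also $M_{3,1}$, from $M_{2,1}$ and $M_{3,2}$; this is where the no-common-zeros hypothesis enters. Second, show that $M \equiv \tilde M$ forces $p = \tilde p$ and $q = \tilde q$ via the matrix of spectral mappings.

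\emph{Step 1 (recovering $M_{3,1}$).} We have $\Phi_1 = C_1 + M_{2,1} C_2 + M_{3,1} C_3$ and $\Phi_2 = C_2 + M_{3,2} C_3$. Both $M_{2,1}$ and $M_{3,1}$ have their poles only at zeros $\la^0$ of $\Delta_{1,1}$.
\begin{itemize}
\item Take a pole $\la^0$ and assume first that it is simple. The residue $\Res_{\la = \la^0} \Phi_1(x,\la)$ solves \eqref{eqv} at $\la^0$, lies in $\mathrm{span}\{C_2, C_3\}$ (so it vanishes at $x = 0$), and satisfies $y(1) = y^{[1]}(1) = 0$.
\item Since $\Delta_{2,2}(\la^0) \ne 0$, the solution $C_2 + a C_3$ satisfying $y(1) = 0$ is unique, namely $\Phi_2(x, \la^0)$. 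Hence this residue equals $\Res M_{2,1} \cdot \Phi_2(x,\la^0)$.
\item Consequently
$$
\Res_{\la=\la^0} M_{3,1}(\la) = M_{3,2}(\la^0) \, \Res_{\la=\la^0} M_{2,1}(\la).
$$
\item For multiple poles, compare the principal parts of the Laurent expansions in the same way. Every coefficient is a solution vanishing at $0$, and the coefficients satisfy a triangular system at $x=1$ that is solvable through $\Phi_2$ and its $\la$-derivatives. This gives the principal parts of $M_{3,1}$ in terms of those of $M_{2,1}$ and the Taylor coefficients of $M_{3,2}$ at $\la^0$.
\end{itemize}
Therefore $M_{3,1} - \tilde M_{3,1}$ is entire. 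Both $\Delta_{3,1}$ and $\Delta_{1,1}$ have order at most $1/3$, and $M_{3,1}(\la) \to 0$ along rays away from the spectrum by the Birkhoff-type asymptotics. So this entire difference is bounded on a sequence of expanding contours and tends to zero. By Liouville's theorem and the maximum principle, $M_{3,1} \equiv \tilde M_{3,1}$.

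\emph{Step 2 (spectral mappings).} Let $\Phi(x,\la) = [\Phi_k^{[j-1]}(x,\la)]_{j,k=1}^3$ and define $P(x,\la) := \Phi(x,\la)\tilde\Phi(x,\la)^{-1}$. Use the representation $\Phi = C M$, with $C(x,\la) = [C_k^{[j-1]}(x,\la)]$ entire. The equality $M \equiv \tilde M$ gives $P = C \tilde C^{-1}$. Here $\det \tilde C$ is constant, equal to $1$, because the trace of $F$ is zero. Hence $P(x,\cdot)$ is entire.

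On the other hand, express $\tilde \Phi^{-1}$ through the adjoint Weyl solutions and use the Birkhoff asymptotics of $\Phi_k$ in the sectors $\arg \la \in (\pi m/3, \pi(m+1)/3)$; for distributional $\sigma$ these come from \cite{SS20}. This shows that $P(x,\la)$ is bounded, after the standard diagonal scaling by powers of $\rho = \la^{1/3}$, and that $P_{1,1} \to 1$, $P_{1,2}, P_{1,3} \to 0$. By Liouville's theorem the first row of $P$ equals $(1,0,0)$. This gives $\Phi_k = \tilde\Phi_k$, hence $C_k = \tilde C_k$ for $k = 1,2,3$.

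Finally, read off the coefficients from the quasi-derivatives. From $C_1^{[2]}$ and $C_2^{[2]}$ we obtain $p + \mathrm{i}\sigma = \tilde p + \mathrm{i}\tilde\sigma$. Since $p$ and $\sigma$ are real, this yields $p = \tilde p$ and $\sigma = \tilde\sigma$ up to a constant, so $q = \tilde q$ in $W_1^{-1}$.

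The main obstacle is the asymptotic estimate in Step 2, together with the contour bounds in Step 1. With $\sigma \in L_1$ only, the Birkhoff solutions have remainders that are merely $o(1)$, not $O(\rho^{-1})$. One therefore has to work with the quasi-derivative matrix and the regularized triangular form to keep the off-diagonal terms of $P$ bounded. I would first establish uniform estimates of $\Phi_k^{[j-1]}$ in each sector outside small discs around the eigenvalues, and only then apply Liouville's theorem.
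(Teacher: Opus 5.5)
Your proposal is correct and follows the Yurko-type route on which the cited result \cite{GYB25} (the extension of \cite[Theorem~2.5.1]{Yur02} to distribution coefficients) is built --- the paper itself only quotes that result and gives no proof. In that route the no-common-zeros hypothesis serves only to recover $M_{3,1}$ from $M_{2,1}$ and $M_{3,2}$, after which uniqueness from the full Weyl--Yurko matrix follows via the spectral-mapping matrix $P = C\tilde C^{-1}$. Two details should be tightened.
\begin{itemize}
\item The principal parts of $M_{3,1}$ at poles of any multiplicity follow at once from $M_{3,1}(\la) = M_{2,1}(\la) M_{3,2}(\la) - C_1(1,\la)/C_3(1,\la)$, which is a consequence of $\Phi_1(1,\la)=0$. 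So you do not need the false claim that all Laurent coefficients of $\Phi_1$ are solutions.
\item At the end, $C_k = \tilde C_k$ does not by itself give $C_k^{[2]} = \tilde C_k^{[2]}$. Note instead that $P = I + \phi E_{3,1}$ with $\phi := p + \mathrm{i}\sigma - \tilde p - \mathrm{i}\tilde\sigma$, and read off $\phi' = 0$, $\phi = (\tilde p - p) + \mathrm{i}(\sigma - \tilde\sigma)$ and $\phi(0)=0$ from $P' = FP - P\tilde F$ and $P(0,\la) = I$.
\end{itemize}
Incidentally, the Birkhoff remainders here are $O(\rho^{-1})$ by Lemma~\ref{lem:Birk}, so the obstacle you anticipate does not arise.
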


Proposition~\ref{prop:Weyl} is valid for any complex-valued $p \in L_1(0, 1)$ and $q \in W_1^{-1}(0, 1)$. In our case, $p$ and $q$ are real-valued, so $M_{2,1}(\la) = M_{3,2}^{\star}(\la)$, and the zeros of $\Delta_{1,1}(\la)$ are $\{ -\overline{\la_n} \}_{n \ge 1}$. Thus, Proposition~\ref{prop:Weyl} implies the following corollary.

\begin{cor} \label{cor:M32}
If $\la_n \ne -\overline{\la_k}$ for all $n, k \ge 1$, then $M_{3,2}(\la)$ uniquely specifies $p \in L_1(0, 1)$ and $q \in W_1^{-1}(0,1)$.
\end{cor}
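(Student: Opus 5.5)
The plan is to deduce the corollary directly from Proposition~\ref{prop:Weyl}, using the symmetry $M_{2,1} = M_{3,2}^{\star}$ available for real-valued coefficients. The argument runs through three checks.

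First, I will verify the symmetry claims stated before the corollary. For real $p$ and $\sigma$, consider the formally adjoint equation. The adjoint of $\ell y = y''' + (py)' + py' + \mathrm{i} q y$ is $-\ell^{*} z$, where
\[
\ell^{*} z = z''' + (pz)' + pz' - \mathrm{i} q z .
\]
Hence, for a solution $y(x,\la)$ of \eqref{eqv}, the function $\overline{y(x, -\overline{\la})}$ solves the adjoint-type equation. Combining this with the Lagrange identity for quasi-derivatives, I will show the following. The problem with conditions $y(0)=0$, $y(1)=y'(1)=0$ has zeros of $\Delta_{1,1}$ equal to $-\overline{\la}$ for $\la$ a zero of $\Delta_{2,2}=C_3(1,\la)$. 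More precisely, $\Delta_{1,1}(\la) = c\,\Delta_{2,2}^{\star}(\la)$ with a constant $c \ne 0$, and correspondingly $\Delta_{3,2} = c'\Delta_{2,1}^{\star}$, which yields $M_{2,1} = M_{3,2}^{\star}$. I will use the Wronskian-type relation between the fundamental matrix $[C_k^{[j-1]}]$ and the inverse of its adjoint. This step is the only real work, and the paper already asserts it, so I will cite it (via \cite{Bond23}) rather than recompute the determinants.

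Second, the hypotheses of Proposition~\ref{prop:Weyl} hold. The zeros of $\Delta_{2,2}$ are $\{\la_n\}$ and the zeros of $\Delta_{1,1}$ are $\{-\overline{\la_k}\}$. The assumption $\la_n \ne -\overline{\la_k}$ therefore says precisely that $\Delta_{1,1}$ and $\Delta_{2,2}$ have no common zeros. For $(\tilde p, \tilde q)$ the corollary is understood in the sense that $\tilde M_{3,2} = M_{3,2}$. I will check that the poles of $M_{3,2}$ determine the eigenvalues $\tilde\la_n$, so the same condition holds for the tilde problem as well. Here a subtlety arises: cancellation of common zeros of $\Delta_{3,2}$ and $\Delta_{2,2}$ could hide some poles. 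I expect this to be the main obstacle. The natural resolution is to state the corollary under the analogous hypothesis on $\tilde\la_n$, or simply to assume both spectra satisfy the separation condition, as happens in Theorem~\ref{thm:uniq}, where $\la_n = \tilde\la_n$.

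Third, the identity $M_{3,2} \equiv \tilde M_{3,2}$ gives $M_{2,1} = M_{3,2}^{\star} = \tilde M_{3,2}^{\star} = \tilde M_{2,1}$. Proposition~\ref{prop:Weyl} then yields $p = \tilde p$ and $q = \tilde q$.
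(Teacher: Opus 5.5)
Your argument is the paper's own: real coefficients give $M_{2,1}=M_{3,2}^{\star}$ (in fact $\Delta_{1,1}=-\Delta_{2,2}^{\star}$ and $\Delta_{2,1}=-\Delta_{3,2}^{\star}$, so $c=c'=-1$), the zeros of $\Delta_{1,1}$ are $\{-\overline{\la_n}\}$, the hypothesis is exactly the no-common-zeros condition, and Proposition~\ref{prop:Weyl} applies.

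The paper also passes over the tilde-problem subtlety you flag without comment (it is harmless in Theorem~\ref{thm:uniq}, where $\tilde\la_n=\la_n$), but it can be closed rather than assumed, as follows. A common zero of $C_2(1,\cdot)$ and $C_3(1,\cdot)$ is a zero of $\Delta_{1,1}=C_2'(1,\cdot)C_3(1,\cdot)-C_3'(1,\cdot)C_2(1,\cdot)$, which the hypothesis excludes. Hence the identity $C_2(1,\la)\tilde C_3(1,\la)=\tilde C_2(1,\la)C_3(1,\la)$ makes $\tilde C_3(1,\la)/C_3(1,\la)$ entire of order at most $1/3$. This quotient tends to $1$ along $\arg\rho=\pi/6$ by Lemma~\ref{lem:asymptDelta}, since the leading term of $\Delta_{2,2}$ there does not depend on the coefficients. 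Proposition~\ref{prop:PL} then gives that it is $\equiv 1$, so $\tilde\la_n=\la_n$ and the separation condition transfers automatically.
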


Suppose that, additionally, the eigenvalues $\{ \la_n \}_{n \ge 1}$ are simple. Following \cite{Bond23}, introduce the weight numbers:
\begin{equation} \label{defbe}
\be_n := -\Res_{\la = \la_n} M_{3,2}(\la) = \frac{C_2(1, \la_n)}{\dot C_3(1, \la_n)}, \quad n \ge 1.
\end{equation}

It is known from \cite{Bond23} that, the case $p \in L_2(0, 1)$ and $q \in W_2^{-1}(0,1)$, the eigenvalues $\{ \la_n \}_{n \ge 1}$ have the asymptotics \eqref{asymptla}, the weight numbers $\{ \be_n \}_{n \ge 1}$ are non-zeros and have the asymptotics
\begin{equation} \label{asymptbe}
\be_n = 3 \la_n \left( 1 + \frac{\eta_n}{n}\right), \quad \{ \eta_n \} \in l_2, \quad n \ge 1.
\end{equation}

We call $\{ \la_n, \be_n \}_{n \ge 1}$ the spectral data of the problem $\mathcal L$. Consider the inverse spectral problem:

\begin{ip} \label{ip:2}
Given the spectral data $\{ \la_n, \be_n \}_{n \ge 1}$, find $p$ and $q$.   
\end{ip}

Theorem~2.5 from \cite{Bond23} implies the following existence result for Inverse Problem~\ref{ip:2}.

\begin{prop}[\hspace*{-3pt}\cite{Bond23, Bond26}] \label{prop:sc}
Let $\{ \la_n, \be_n \}_{n \ge 1}$ be any complex numbers satisfying the following conditions:
\begin{enumerate}
\item $\la_n \ne \la_k$ $(n \ne k)$, $\mbox{Re} \, \la_n < 0$
and $\be_n \ne 0$ for all $n \ge 1$.
\item The asymptotic relations \eqref{asymptla} and \eqref{asymptbe} hold.
\end{enumerate}
Then, there exist unique real-valued functions $p \in L_2(0, 1)$ and $q \in W_2^{-1}(0, 1)$ such that $\{ \la_n, \be_n \}_{n \ge 1}$ are the spectral data of $\mathcal L(p, q)$, and $p_0 = \int_0^1 p(x) \, dx$.

If the values $\{ \la_n \}_{n \ge 1}$ satisfy the refined asymptotics \eqref{asymptla+} and
\begin{equation} \label{asymptbe+}
\be_n = 3\la_n \left(1 + \frac{p_0 + p_1}{2 \pi^2 n^2} + \frac{\eta_n}{n^2} \right), \quad \{ \eta_n \} \in l_2,
\end{equation}
then $p \in W_2^1[0,1]$, $q \in L_2(0, 1)$, $q_0 = \int_0^1 q(x) \, dx$, and $p_1 = p(0) = p(1)$.
\end{prop}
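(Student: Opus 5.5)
The plan is to obtain Proposition~\ref{prop:sc} as a specialization of the general existence theorem of \cite{Bond23} (Theorem~2.5), i.e.\ to check that conditions~1--2 imply all hypotheses of that theorem. That theorem is formulated for the data $\{\la_n,\be_n\}_{n\ge1}$ of $\mathcal L$, with an auxiliary model problem (here $\mathcal L_0=\mathcal L(0,0)$). Its hypotheses should be of three kinds:
\begin{itemize}
\item[(a)] the asymptotics;
\item[(b)] the structural restrictions (simple $\la_n$, nonzero $\be_n$, and poles of $M_{3,2}$ separated from the poles $\{-\overline{\la_n}\}$ of $M_{2,1}=M_{3,2}^{\star}$);
\item[(c)] unique solvability of the linear main equation $(I-\tilde R(x))\psi(x)=\tilde\psi(x)$ in a suitable Banach space of sequences, for every $x\in[0,1]$.
\end{itemize}

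First I would reconstruct the Weyl-Yurko element from the data,
$$
M_{3,2}(\la)=-\sum_{n\ge1}\frac{\be_n}{\la-\la_n}
$$
(suitably regularized, using $\be_n\sim3\la_n$), and set $M_{2,1}:=M_{3,2}^{\star}$. This defines a candidate matrix $M(\la)$ with the symmetry required for real-valued $p,q$. Condition~1 gives simplicity and nonvanishing of $\be_n$. The assumption $\mbox{Re}\,\la_n<0$ gives $\mbox{Re}(-\overline{\la_k})>0$, so $\la_n\ne-\overline{\la_k}$ for all $n,k$. This is precisely the separation needed for Proposition~\ref{prop:Weyl} and for uniqueness (Corollary~\ref{cor:M32}). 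Next I would compare \eqref{asymptla} and \eqref{asymptbe} with the model values $\la_n^0$ and $\be_n^0$. The differences $\la_n-\la_n^0$ and $\be_n-\be_n^0$ then have exactly the $l_2$-weighted decay ($\{\varkappa_n\}$, $\{\eta_n\}\in l_2$) under which the kernel of $\tilde R(x)$ is a Hilbert--Schmidt-type perturbation. This is (a), and it is what makes $p\in L_2$, $q\in W_2^{-1}$ attainable.

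The main obstacle is (c): invertibility of $I-\tilde R(x)$ for every $x$ without any smallness assumption. By Fredholm theory ($\tilde R(x)$ is compact and depends continuously on $x$), it suffices to show that the homogeneous equation has only the trivial solution. My first attempt would be the standard Yurko argument. From a homogeneous solution, build functions $\gamma(x,\la)$ meromorphic in $\la$ with poles only at $\{\la_n\}\cup\{-\overline{\la_n}\}$. Then pair $\gamma$ with its $\star$-reflection via the real structure (the symmetry $M_{2,1}=M_{3,2}^{\star}$ plays the role of self-adjointness), and integrate over expanding contours. This yields an identity of the form $\sum_n \be_n|\ldots|^2+\overline{\be_n}|\ldots|^2=0$, or equivalently a vanishing quadratic form. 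The sign information coming from $\mbox{Re}\,\la_n<0$ should force all terms to vanish. If a direct positivity argument fails, I would instead rely on the refinement in \cite{Bond26}, which establishes exactly this unconditional solvability for the class of data in condition~1. Once (c) holds, the reconstruction formulas of \cite{Bond23} produce real-valued $p\in L_2(0,1)$ and $q\in W_2^{-1}(0,1)$ with these spectral data. Uniqueness follows from Corollary~\ref{cor:M32}, and the identification $p_0=\int_0^1p\,dx$ comes from matching the second term of \eqref{asymptla} with the known eigenvalue asymptotics.

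For the refined statement, I would reuse the same construction and track one more order in the asymptotics. Relations \eqref{asymptla+} and \eqref{asymptbe+} make the differences from the model data decay faster by a factor $1/n$. In the reconstruction formulas this upgrades the regularity of the recovered $\sigma$ and $p$ by one derivative, giving $p\in W_2^1[0,1]$ and $q\in L_2(0,1)$. Finally, I would compare the constant terms $\mathrm{i}q_0-2p_1$ and $(p_0+p_1)/(2\pi^2n^2)$ with the direct-problem asymptotics for smooth coefficients. This identifies $q_0=\int_0^1q\,dx$ and $p_1=p(0)=p(1)$, with the last equality coming from periodicity.
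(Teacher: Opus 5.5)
Your plan matches the paper's treatment: the paper does not prove Proposition~\ref{prop:sc} itself but imports it from Theorem~2.5 of \cite{Bond23} together with \cite{Bond26}, with conditions~1--2 serving as the full solvability conditions, so the unique solvability of the main equation is part of the cited result rather than a hypothesis you need to verify. Your remarks on the internals are therefore not needed, and two of them would fail if you carried them out: under condition~1, $\mbox{Re}\,\be_n$ has no fixed sign, so the sketched identity would not force the homogeneous solution to vanish; and the comparison model must carry the constant $p_0$ (e.g.\ $\mathcal L(p_0,0)$, as in Section~\ref{sec:exist}), because with $\mathcal L_0$ one gets $\la_n-\la_n^0\sim 2p_0\nu n$, which is not of the form $n\varkappa_n$ with $\{\varkappa_n\}\in l_2$.
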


In addition, we formulate the following corollary of Phragmen-Lindel\"of's theorem and Liouville's theorem, needed in the proofs.

\begin{prop}[\hspace{-3pt} \cite{Yur16}, Corollary~5.1] \label{prop:PL}
If an entire function $f(\la)$ of order less then $1/2$ tends to zero as $|\la| \to \infty$ on a certain ray $\arg \la = c_0$, then $f(\la) \equiv 0$.
\end{prop}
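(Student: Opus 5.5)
The plan is to reduce the statement to the Phragm\'en--Lindel\"of principle in a sector of opening $2\pi$, and then finish with Liouville's theorem. First, replacing $f(\la)$ by $f(e^{\mathrm{i} c_0}\la)$ does not change the order, so we may assume without loss of generality that $c_0 = 0$, i.e.\ $f(\la) \to 0$ as $\la \to +\infty$ along the positive real axis. Since $f$ is continuous on $[0,\infty)$ and tends to zero there, it is bounded on that ray: $|f(\la)| \le M$ for $\la \ge 0$.

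Next, consider the slit plane $S := \{ \la \ne 0 \colon 0 < \arg \la < 2\pi \}$. This is an angular domain of opening $2\pi = \pi/\alpha$ with $\alpha = 1/2$. Its boundary consists of the two banks of the ray $[0,\infty)$ together with the origin. Since $f$ is entire, its boundary values on both banks coincide with its values on the ray, so $|f| \le M$ on $\partial S$. By hypothesis, $f$ has order $\rho < 1/2 = \alpha$, so $|f(\la)| \le \exp(|\la|^{\rho+\eps})$ for large $|\la|$, with $\rho + \eps < 1/2$. The Phragm\'en--Lindel\"of theorem for a sector of opening $\pi/\alpha$ then gives $|f| \le M$ throughout $S$, and hence on all of $\mathbb C$. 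If one prefers to reduce to the half-plane version, the map $w = \la^{1/2}$ (principal branch on $S$) sends $S$ onto the upper half-plane, and $g(w) := f(w^2)$ has growth of order $2\rho < 1$ there. The classical half-plane Phragm\'en--Lindel\"of theorem then yields boundedness of $g$, and therefore of $f$.

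By Liouville's theorem, $f$ is then a constant $c$. Letting $\la \to +\infty$ along the ray forces $c = \lim f(\la) = 0$, so $f \equiv 0$. The only point requiring care is the main obstacle: verifying that the growth condition matches the opening of the sector. This works precisely because order $<1/2$ is strictly below the critical exponent $\pi/(2\pi)$. For example, $\cos\sqrt{\la}$ has order exactly $1/2$ and is bounded on the negative real axis without vanishing identically, which shows that the condition on the order cannot be relaxed.
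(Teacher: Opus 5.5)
Your argument is correct and follows the route the paper indicates: the paper gives no proof of its own, citing Yurko's Corollary~5.1 and calling the statement a consequence of the Phragm\'en--Lindel\"of theorem and Liouville's theorem, which is exactly your slit-plane argument (opening $2\pi$, critical order $1/2$) followed by Liouville and the limit along the ray. One small slip is in your closing remark: $\cos\sqrt{\la}$ is bounded on the \emph{positive} real axis (on the negative axis it equals $\cosh\sqrt{|\la|}$), and since it does not tend to zero there, $\sin\sqrt{\la}/\sqrt{\la}$ is the right example showing that the order bound $1/2$ cannot be relaxed.
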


\section{Asymptotics} \label{sec:asympt}

In this section, we obtain the necessary asymptotics of the eigenvalues $\{ \mu_n \}_{n \in \mathbb Z_0}$ and of characteristic functions. Our approach is based on the standard method \cite{Nai68} taking into account recent developments \cite{SS20, BK21, Bond22, KS24}. So, we outline the technique briefly.

Put $\la = \rho^3$. Consider the sector
\begin{equation*} 
\Gamma := \bigl\{ \rho \in \mathbb C \colon 0 < \arg \rho < \pi/3 \bigr\}
\end{equation*}
and the corresponding extended sector
$$
\Gamma_{h,\rho_*} :=  \left\{ \rho \in \mathbb C \colon \rho + h \exp(\mathrm{i}\pi/6)   \in \Gamma, \, |\rho| > \rho^*\right\}, \quad h > 0, \, \rho_* > 0.
$$

Put $\om_1 := \exp(2\pi\mathrm{i}/3)$, $\om_2 := \exp(-2\pi\mathrm{i}/3)$, $\om_3 := 1$. Obviously,
\begin{equation*} 
\mbox{Re} \, (\rho \om_1) < \mbox{Re} \, (\rho \om_2) < \mbox{Re} \, (\rho \om_3), \quad \rho \in \Gamma.
\end{equation*}

The notation $\eps(\rho)$ will be used for various functions defined on a given set $G$ such that $\eps(\rho) = o(1)$ as $|\rho| \to \infty$, $\rho \in G$, 
and, in the case $p \in L_2(0, 1)$ and $q \in W_2^{-1}(0,1)$, there holds $\{ \eps(\rho_n) \} \in l_2$ for any non-condensing sequence $\{ \rho_n \} \subset G$ and each fixed $x \in [0,2]$. A sequence $\{ \rho_n \}_{n = 1}^{\infty}$ is called non-condensing if
$$
\sup_{r > 0} (N(r + 1) - N(r)) < \infty, \quad N(r) := \# \{ n \in \mathbb N \colon |\rho_n| \le r \}
$$

Basing on \cite{SS20, Yur22}, we deduce the following lemma.

\begin{lem} \label{lem:Birk}
For every $h > 0$, there exist $\rho_* > 0$ and a fundamental system of solutions $\{ y_k(x, \rho) \}_{k = 1}^3$ of equation \eqref{eqv} with the following properties $(k = 1, 2, 3, \, j = 0, 1, 2)$:
\begin{enumerate}
\item The functions $y_k^{[j]}(x, \rho)$ are continuous by $x \in [0,2]$ and by $\rho \in \overline{\Gamma}_{h,\rho_*}$.
\item For each fixed $x \in [0,2]$, the functions $y_k^{[j]}(x, \rho)$ are analytic in $\Gamma_{h,\rho_*}$.
\item The asymptotic relation holds:
\begin{equation} \label{asympty}
y_k^{[j]}(x, \rho) = (\rho \om_k)^j \exp(\rho \om_k x) \left( 1 + \frac{\tau(x)}{\rho \om_k} + \frac{\eps_{k,j}(x, \rho)}{\rho}\right), 
\end{equation}
where $\tau(x) := -\frac{2}{3} \int_0^x p(t) \, dt$ and the functions $\eps_{k,j}(x, \rho)$ have the properties of $\eps(\rho)$ for $\rho \in \overline{\Gamma}_{h, \rho_*}$ and each fixed $x \in [0, 2]$.
\item If $p \in W_2^1$ and $q \in L_2$, the refined asymptotic relation is satisfied:
\begin{equation} \label{asympty+}
y_k^{[j]}(x, \rho) = (\rho \om_k)^j \exp(\rho \om_k x) \left( 1 + \frac{\tau(x)}{\rho \om_k} + \frac{\varsigma(x) + j \tau'(x)}{(\rho \om_k)^2} + \frac{\eps_{k,j}(x, \rho)}{\rho^2}\right),
\end{equation}
where $\eps_{k,j}(x, \rho)$ have the same properties as in p.~3 and
$$
\varsigma(x) := \frac{2}{9} \left( \int_0^x p(t) \, dt \right)^2 + \frac{1}{3} p(x) - \frac{\mathrm{i}}{3} \int_0^x q(t) \, dt.
$$
\end{enumerate}
\end{lem}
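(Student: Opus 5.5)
We would obtain Lemma~\ref{lem:Birk} by reducing equation \eqref{eqv} to a first-order system and then applying the Birkhoff-type asymptotic theory for systems with distribution coefficients from \cite{SS20}, in the form adapted to higher-order equations in \cite{Yur22}.

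\textbf{Reduction to a system.} Set $Y = [y^{[0]}, y^{[1]}, y^{[2]}]^T$. By \eqref{quasi}, the equation $y^{[3]} = \la y$ becomes
$$
Y' = (A(\rho) + F(x)) Y, \qquad A(\rho) = \begin{bmatrix} 0 & 1 & 0 \\ 0 & 0 & 1 \\ \rho^3 & 0 & 0 \end{bmatrix}.
$$
Here the entries of $F$ (namely $p \pm \mathrm{i}\sigma$) belong to $L_1$, respectively $L_2$. Because $p$ and $q$ are $1$-periodic, $\sigma$ extends to $[0,2]$ with the same integrability, so the system can be treated on all of $[0,2]$.

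We diagonalize $A(\rho)$ through the Vandermonde matrix $V(\rho) = [(\rho\om_k)^{j}]_{j,k}$. This turns the system into $Z' = (\rho\,\mathrm{diag}(\om_1,\om_2,\om_3) + V^{-1}FV) Z$. The perturbation $V^{-1}FV$ splits into terms of order $\rho^{1}, \rho^0, \rho^{-1}$.
\begin{itemize}
\item The entry $\sigma$ sits in positions $(2,1)$ and $(3,2)$. In the perturbation it therefore carries the factor $\rho^{-1}$ or appears at order $\rho^0$.
\item A distribution term $\sigma$ with $\sigma \in L_1$ only is harmless after one further substitution $Z \mapsto (I + \rho^{-1}\cdot\text{(something)})Z$.
\item The diagonal part of the order-$\rho^0$ term produces the $\tau(x)$ correction. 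One checks that this diagonal part equals $-\tfrac{2}{3}p$: the trace of the relevant rotated entries of $F$ gives $-(p+\mathrm{i}\sigma)\cdot\tfrac13 + (-p+\mathrm{i}\sigma)\cdot\tfrac13$, so the $\sigma$ parts cancel.
\item Removing this diagonal part by the factor $\exp(\tau(x)/(\rho\om_k))$ expands to $1 + \tau/(\rho\om_k)$.
\end{itemize}

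\textbf{Applying \cite{SS20}.} For $\rho$ in the shifted sector $\Gamma_{h,\rho_*}$, the ordering $\mbox{Re}(\rho\om_1) < \mbox{Re}(\rho\om_2) < \mbox{Re}(\rho\om_3)$ is preserved up to bounded errors. This is exactly the setting of \cite{SS20}: a system $Z' = (\rho D + R(x,\rho))Z$ with $R \in L_1$, or $L_2$, uniformly in $\rho$. That theorem yields a fundamental matrix $Z = (I + E(x,\rho))\exp(\rho D x)$. Its error satisfies $E = o(1)$, is continuous in $(x,\rho)$ and analytic in $\rho$, and $\{E(x,\rho_n)\}\in l_2$ on non-condensing sequences when $R\in L_2$. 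The last property comes from the Bessel-type inequality for Fourier-like integrals $\int_0^x r(t)e^{\rho(\om_j-\om_k)t}\,dt$ along non-condensing sequences.

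Transforming back, multiplying by $V(\rho)$, and normalizing by $(\rho\om_k)^j$ gives \eqref{asympty}. The error has the form $\eps_{k,j}(x,\rho)/\rho$, with $\eps_{k,j}$ having the properties of $\eps(\rho)$. Items 1 and 2 of the lemma are inherited from the construction.

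\textbf{Refined asymptotics.} For item 4, when $p\in W_2^1$ and $q\in L_2$, we rewrite the equation without quasi-derivatives, $y''' + 2py' + (p' + \mathrm{i}q)y = \la y$ with $L_2$ coefficients. We then perform one more step of the standard Naimark iteration \cite{Nai68}. This produces the second-order term $\varsigma(x)/(\rho\om_k)^2$, obtained by solving the next recurrence for the expansion coefficients, together with the extra $j\tau'(x)$ coming from differentiating the factor $\tau(x)/(\rho\om_k)$ in the quasi-derivatives. The squared term $\tfrac29(\int_0^x p)^2$ arises as $\tfrac12\tau^2$.

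\textbf{Main obstacle.} We expect the hardest step to be the $l_2$ property of the remainders under only $q\in W_2^{-1}$. It requires the uniform-in-sector estimates of \cite{SS20} rather than the classical ones; we would cite those directly and only verify that our $R(x,\rho)$ fits their hypotheses.
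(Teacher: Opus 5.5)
Your route is the one the paper intends; the paper gives no argument beyond citing \cite{SS20, Yur22}. However, the bookkeeping you supply is wrong exactly where the shape of \eqref{asympty} comes from, and part~4 fails for $j=2$.

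\textbf{Scaling of the perturbation.} The perturbation has no terms of order $\rho^1$ or $\rho^0$, and $\sigma$ never enters at order $\rho^0$.

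Write $Y' = (A(\rho) + F_0(x))Y$, where $F_0$ is only the strictly lower-triangular part of $F$: the entries $-(p+\mathrm{i}\sigma)$ at $(2,1)$ and $-p+\mathrm{i}\sigma$ at $(3,2)$. The superdiagonal units of $F$ already sit in $A(\rho)$, so writing $A(\rho)+F$ counts them twice.

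Take $V(\rho) = \mathrm{diag}(1,\rho,\rho^2)\,V_0$ with $V_0 = [\om_k^{j}]_{j=0,1,2;\,k=1,2,3}$. Both entries of $F_0$ are subdiagonal, so $V^{-1}F_0V = \rho^{-1}R(x)$, where $R = V_0^{-1}F_0V_0$ does not depend on $\rho$ and $R_{kk} = -\frac{2p}{3\om_k}$. Your cancellation of $\sigma$ is therefore right, but the diagonal term is $\tau'(x)/(\rho\om_k)$. An order-$\rho^0$ diagonal term would produce the factor $\exp(-\frac23\int_0^x p)$, not $1+\tau/(\rho\om_k)$.

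This is not cosmetic. From $E = o(1)$, which is all you extract from \cite{SS20}, you cannot read off a remainder of the form $\eps_{k,j}(x,\rho)/\rho$. What yields that form is the global factor $\rho^{-1}$:
\begin{itemize}
\item Conjugate by $\exp\bigl(\rho^{-1}\int_0^x \mathrm{diag}\,R\bigr)$. The remaining perturbation is $\rho^{-1}$ times an off-diagonal $L_1$ (resp.\ $L_2$) matrix.
\item Do one Volterra iteration, integrating from $0$ or from $2$ according to the order of $\mbox{Re}(\rho\om_l)$, so the kernels stay bounded on $\overline{\Gamma}_{h,\rho_*}$. This gives $E = \rho^{-1}\bigl(\int e^{\rho(\om_l-\om_k)(x-t)} r_{lk}(t)\,dt\bigr)_{l\ne k} + O(\rho^{-2})$.
\item These Fourier-type integrals are $o(1)$, and they are $l_2$ along non-condensing sequences when $r_{lk}\in L_2$.
\end{itemize}
No extra substitution to handle $\sigma$ is needed.

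\textbf{Part 4 for $j=2$.} Differentiating $1 + \tau/(\rho\om_k) + \varsigma/(\rho\om_k)^2$ gives the coefficient $\varsigma + j\tau'$ for the classical derivatives $y_k^{(j)}$. These coincide with $y_k^{[j]}$ only for $j=0,1$. Since $y^{[2]} = y'' + (p+\mathrm{i}\sigma)y$ exactly, any system in which $y_k$ and $y_k''$ carry $\varsigma$ and $\varsigma+2\tau'$ has coefficient $\varsigma + 2\tau' + p + \mathrm{i}\sigma$ at $(\rho\om_k)^{-2}$ in $y_k^{[2]}$. The term $(p+\mathrm{i}\sigma)(x)$ cannot be absorbed into $\eps_{k,2}/\rho^2$.

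Concretely, take $p\equiv c$ and $q=\sigma=0$. Then $y_k = N_k(\rho)e^{z_k x}$ with $z_k^3 + 2cz_k = \rho^3$ and $z_k = \rho\om_k - \frac{2c}{3\rho\om_k} + O(\rho^{-3})$. This gives $y_k^{[2]}/\bigl((\rho\om_k)^2 y_k\bigr) = 1 - \frac{c}{3(\rho\om_k)^2} + O(\rho^{-4})$, independently of $N_k$. Formula \eqref{asympty+} would instead give $1 - \frac{4c}{3(\rho\om_k)^2} + o(\rho^{-2})$.

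So your argument establishes \eqref{asympty+} for $j=0,1$ (and for $y_k''$). That covers the paper's explicit use of it: only $y_k$ enters $D(\rho)$ in Lemma~\ref{lem:asymptmu}. The case $j=2$ as written needs the extra term $p+\mathrm{i}\sigma$.
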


By virtue of Lemma~\ref{lem:Birk} and the initial conditions \eqref{initC}, for $x \in [0, 2]$, $\la = \rho^3$, and $\rho \in \overline{\Gamma}_{h,\rho_*}$, there holds
\begin{equation} \label{expCk}
C_k(x, \la) = \sum_{j = 1}^3 a_{j,k}(\rho) y_j(x, \rho), \quad k = 1, 2, 3,
\end{equation}
where 
\begin{equation} \label{defA}
A(\rho) = \bigl[a_{j,k}(\rho)\bigr]_{j,k = 1}^3 = 
\begin{bmatrix}
y_1(0, \rho) & y_2(0, \rho) & y_3(0, \rho) \\
y_1'(0, \rho) & y_2'(0, \rho) & y_3'(0, \rho) \\
y_1^{[2]}(0, \rho) & y_2^{[2]}(0, \rho) & y_3^{[2]}(0, \rho) 
\end{bmatrix}^{-1}.
\end{equation}

Substituting \eqref{asympty} into \eqref{defA}, we get
\begin{equation} \label{asympta}
a(\rho) := \det A(\rho) =  \frac{a_0}{\rho^3} \left( 1 + \frac{\eps(\rho)}{\rho}\right), \quad a_0 := \frac{\mathrm{i}}{3 \sqrt 3},
\quad \rho \in \overline{\Gamma}_{h,\rho_*}.
\end{equation}

Clearly, the eigenvalues $\{ \mu_n \}_{n \in \mathbb Z_0}$ of the problem $\mathcal M$ \eqref{eqv}, \eqref{bc2} (counting with multiplicities) coincide with the zeros of the characteristic function
\begin{equation} \label{Delta1}
\Delta(\la) = \begin{vmatrix}
C_1(0, \la) & C_2(0, \la) & C_3(0, \la) \\
C_1(1, \la) & C_2(1, \la) & C_3(1, \la) \\
C_1(2, \la) & C_2(2, \la) & C_3(2, \la)
\end{vmatrix}.
\end{equation}

Using \eqref{expCk}, we obtain
\begin{equation} \label{defD}
\Delta(\la) = D(\rho) a(\rho), \quad D(\rho) := \begin{vmatrix}
y_1(0, \rho) & y_2(0, \rho) & y_3(0, \rho) \\
y_1(1, \rho) & y_2(1, \rho) & y_3(1, \rho) \\
y_1(2, \rho) & y_2(2, \rho) & y_3(2, \rho)
\end{vmatrix}
\end{equation}
for $\la = \rho^3$, $\rho \in \overline{\Gamma}_{h,\rho_*}$.

\begin{lem} \label{lem:asymptmu}
For $p \in L_1$ and $q \in W_1^{-1}$, the eigenvalues $\{ \mu_n \}_{n \in \mathbb Z_0}$ possess the asymptotics \eqref{asymptmu} with $p_0 = \int_0^1 p(x) \, dx$ and $\kappa_n = o(1)$ as $|n| \to \infty$. 
\begin{itemize}
\item If $p \in L_2$ and $q \in W_2^{-1}$, then \eqref{asymptmu} holds  with $\{ \kappa_n \}_{n \in \mathbb Z_0} \in l_2$.
\item For $p \in W_2^1$ and $q \in L_2$, the asymptotic relation \eqref{asymptmu+} holds with $p_0 = \int_0^1 p(x) \, dx$, $q_0 = \int_0^1 q(x) \, dx$, $p_1 = p(0) = p(1)$, and
$\{ \kappa_n \}_{n \in \mathbb Z_0} \in l_2$.
\end{itemize}
\end{lem}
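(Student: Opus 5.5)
Expand the determinant $D(\rho)$ from \eqref{defD} using the Birkhoff asymptotics \eqref{asympty} of Lemma~\ref{lem:Birk}, locate its zeros by Rouch\'e's theorem in each sector, and convert back to $\la = \rho^3$.

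\textbf{Leading behaviour of $D(\rho)$.} For $\rho \in \overline{\Gamma}_{h,\rho_*}$, write $y_k(x,\rho) = \exp(\rho\om_k x)\bigl(1 + \tau(x)/(\rho\om_k) + \eps(\rho)/\rho\bigr)$ and expand the $3\times 3$ determinant. Since $\mbox{Re}(\rho\om_1) < \mbox{Re}(\rho\om_2) < \mbox{Re}(\rho\om_3)$, the terms with the largest exponential growth are those in which $y_3$ and $y_2$ are evaluated at the larger points $x=2$ and $x=1$. Along the boundary ray $\arg\rho = \pi/3$ the moduli of $e^{\rho\om_2}$ and $e^{\rho\om_3}$ coincide, so eigenvalues lie near this ray. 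The dominant terms are therefore
$$
e^{2\rho\om_3+\rho\om_2}\,[1]_\rho \;-\; e^{2\rho\om_2+\rho\om_3}\,[1]_\rho,
$$
with lower-order contributions carrying an extra factor $e^{\rho(\om_1-\om_2)}$, which is exponentially small away from the ray. Here $[1]_\rho = 1 + c(\rho)/\rho + \eps(\rho)/\rho$. Using $\tau(2) = 2\tau(1) = -\frac{4}{3}p_0$ (by periodicity of $p$), the correction $c$ is explicit. Factoring out the first exponential leaves the equation
$$
1 - e^{\rho(\om_2-\om_3)}\Bigl(1 + \frac{c_1}{\rho} + \frac{\eps(\rho)}{\rho}\Bigr) = 0,
$$
where $c_1$ comes from $\tau(1)\bigl(\om_2^{-1}-\om_3^{-1}\bigr)$-type differences.

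\textbf{Localizing the zeros.} The unperturbed zeros are $\rho(\om_2-\om_3) = 2\pi \mathrm{i} n$. Since $\om_2-\om_3 = -\sqrt3\, e^{\mathrm{i}\pi/6}$, this gives $\rho_n^0 = \nu n\, e^{\mathrm{i}\pi/3}$ up to sign conventions, with $\nu = 2\pi/\sqrt3$. Then $\la = \rho^3 = -(\nu n)^3$ on this ray, and the other sector accounts for the opposite sign; together these give \eqref{asymptmu} with $n \in \mathbb Z_0$ after the relabelling fixed by $\mathcal M_0$. Rouch\'e's theorem on small circles around $\rho_n^0$ gives $\rho_n = \rho_n^0 + c_1'/n + \eps_n/n$. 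Cubing yields $\la_n = (\rho_n^0)^3 + 3(\rho_n^0)^2\,\delta\rho_n$, which produces the term $-2p_0\nu n$ and the remainder $n\kappa_n$ with $\kappa_n = o(1)$. The counting of the zeros (none near $0$ is lost, and the indexing matches) is done by Rouch\'e on large contours, comparing with $\Delta_0$ of $\mathcal M_0$ and using that $\Delta$ is entire of order $1/3$.

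\textbf{Regularity refinements.} In the $L_2$ case, $\{\eps(\rho_n)\} \in l_2$ because $\{\rho_n\}$ is non-condensing, so $\{\kappa_n\} \in l_2$. In the $W_2^1$ case, use \eqref{asympty+} instead and carry the expansion to order $\rho^{-2}$. This produces the constant $\mathrm{i} q_0$ from $\varsigma$, via $\int_0^2 q = 2q_0$ and the differences $\varsigma(2)-\varsigma(1)$ and $\varsigma(1)-\varsigma(0)$. The quadratic terms in $\tau$ and the terms $p(0), p(1), p(2)$ should cancel, because $p(0)=p(1)=p(2)$ and the quadratic parts combine exactly into the square of the first-order correction.

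\textbf{Main obstacle.} The bookkeeping in this second-order expansion is the hard step: one must verify that the $p_1$ terms cancel, leaving only $\mathrm{i}q_0$. I would handle it by computing the determinant with generic $\varsigma$ values, keeping the three exponential prefactors symbolic.
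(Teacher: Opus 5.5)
Your proposal is correct and follows the paper's argument: expand $D(\rho)$ via the Birkhoff asymptotics, apply Rouch\'e in a strip along a boundary ray of $\Gamma$, cube, and use a counting lemma. The only difference is that you work near $\arg\rho=\pi/3$ (the negative $\mu_n$, balancing $e^{\rho(2\om_3+\om_2)}$ against $e^{\rho(2\om_2+\om_3)}$), while the paper works in $\mathcal S_+$ near $\arg\rho=0$, where the positive $\mu_n$ lie and the balance is $e^{\rho\om_2}$ against $e^{\rho\om_1}$ with $y_3(2,\rho)$ factored out. That other boundary ray of the same sector, not ``another sector'', is where the opposite sign lives, and there $e^{\rho(\om_1-\om_2)}$ is not small. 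Your second-order bookkeeping also closes: $\varsigma-\tau^2/2=\frac13 p(x)-\frac{\mathrm i}{3}\int_0^x q$, so only $-\frac{\mathrm i}{3}q_0$ survives and cubing gives exactly $\mathrm i q_0$ with no $p_1$.
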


\begin{proof}
Similarly to the Counting Lemma in \cite{BK21}, one can check that, for every sufficiently large integer $N > 0$, the function $\Delta(\la)$ has exactly $2N$ zeros $\{ \mu_n \}_{n = \overline{-N, N} \setminus \{ 0 \}}$ (counting with multiplicities) in the circle $|\la| < \nu^3 (N + 1/4)^3$, $\nu = \dfrac{2 \pi}{\sqrt 3}$. For sufficiently large $|n|$, we have $\mu_n = \theta_n^3$, where $\theta_n$ is a zero of $D(\rho)$ \eqref{defD}. Indeed, the function $a(\rho)$ has no zeros as $|\rho| \to \infty$ due to~\eqref{asympta}.

\begin{figure}[h!]
\centering
\begin{tikzpicture}
\filldraw[gray!10] (-0.6582, -0.38) -- (4, -0.38) -- (4, 0.38) -- (2.3291, 3.2741) -- (1.6709, 3.6541) -- cycle;
\filldraw[gray!25] (1, -0.38) -- (4, -0.38) -- (4, 0.38) --  (1, 0.38) -- cycle;
\filldraw[gray!25] (0.8291, 0.6760) -- (2.3291, 3.2741) -- (1.6709, 3.6541) -- (0.1709, 1.0560) -- cycle;
\filldraw[white] circle(1);

\draw[dotted] (-1, 0) edge (4, 0);
\draw[dotted] (0, -1) edge (0, 4);

\draw[dotted] (0, 0) edge (2.0, 3.4641);
\draw (-0.6582, -0.38) edge (1.6709, 3.6541);
\draw (-0.6582, -0.38) edge (4, -0.38);
\draw (2.5, 1.3) node{$\Gamma_{h,\rho_*}$};
\draw (0.925, -0.38) arc (-22.33:82.35:1cm);
\draw (1, -0.38) edge (4, -0.38);
\draw (1, 0.38) edge (4, 0.38);
\draw (1, -0.38) edge (1, 0.38);
\draw (2, -0.1) node{$\mathcal S_+$};
\draw (1.1, 1.8) node{$\mathcal S_-$};
\draw (0.8291, 0.6760) edge (2.3291, 3.2741);
\draw (0.1709, 1.0560) edge (1.6709, 3.6541);
\draw (0.8291, 0.6760) edge (0.1709, 1.0560);
\end{tikzpicture}
\caption{Strips $\mathcal S_+$ and $\mathcal S_-$}
\label{img:strips}
\end{figure}

Consider $\rho$ in the strip (see Fig.~\ref{img:strips}):
\begin{equation} \label{strip}
\mathcal S_+ := \bigl\{ \rho \in \mathbb C \colon \mbox{Re}\, \rho \ge h_1, \, |\mbox{Im} \rho| \le h_2 \bigr\}, \quad \mathcal S_+ \subset \overline{\Gamma}_{h, \rho_*}, 
\end{equation}
for suitable $h, \, \rho_* > 0$. Substituting \eqref{asympty} into \eqref{defD}, we obtain the asymptotics
$$
D(\rho) = y_3(2, \rho) \left( \exp(\rho \om_2) \biggl( 1 + \frac{\tau(1)}{\rho \om_2} \biggr)- \exp(\rho \om_1) \biggl( 1 + \frac{\tau(1)}{\rho \om_1}\biggr) + \frac{\eps(\rho)}{\rho}\right), \quad \rho \in \mathcal S_+.
$$
Applying the standard approach based on Rouche's Theorem (see \cite[Theorem~1.3]{FY01}), one can show that $D(\rho)$ has exactly one zero in a neighborhood of $\theta_n^0 = \nu n$ for each sufficiently large positive integer $n$. Specifically,
$$
\theta_n = \nu n + \frac{\tau(1)}{\nu n} + \frac{\eps_n}{n}, \quad \eps_n = o(1), \quad n \to \infty,
$$
and $\{ \eps_n \} \in l_2$ if $p \in L_2$ and $q \in W_2^{-1}$.
Analogous asymptotics can be obtained for negative values of $n$. Computing $\mu_n = \theta_n^3$ and taking the relation $\tau(1) = -\frac{2}{3} p_0$ into account, we arrive at \eqref{asymptmu}. In the case $p \in W_2^1$ and $q \in L_2$, we use the refined asymptotics \eqref{asympty+} to similarly derive~\eqref{asymptmu+}.
\end{proof}

Note that the asymptotic formulas \eqref{asymptmu} and \eqref{asymptmu+} are consistent with the ones in \cite{BK21, BK25} for the coefficients $p$ and $q$ in slightly different classes.

Using the standard method (see \cite[Theorem~1.4]{FY01}), one can easily show that the characteristic functions $\delta(\la) := C_3(1,\la)$ of $\mathcal L$ and $\Delta(\la)$ \eqref{Delta1} of $\mathcal M$ can be recovered from their zeros by formulas \eqref{prod}. In view of the asymptotics \eqref{asymptla} and \eqref{asymptmu}, the infinite products in \eqref{prod} converge absolutely and uniformly on compact sets.

\begin{lem} \label{lem:asymptDelta}
For $\la = \rho^3$, $\arg \rho = \pi/6$, $|\rho| \to \infty$, there hold
\begin{align*}
& \Delta_{1,1}(\la) \sim a_0 (\om_2 - \om_3) \rho^{-2} \exp(\rho(\om_2 + \om_3)), \quad
\Delta_{2,2}(\la) \sim a_0 (\om_2 - \om_3) \rho^{-2} \exp(\rho \om_3), \\
& \Delta_{3,2}(\la) \sim -a_0 (\om_2^2 - \om_1^2) \rho^{-1} \exp(\rho \om_3), \quad \Delta(\la) \sim a_0 \rho^{-3} \exp(\rho(\om_2 + 2 \om_3)),
\end{align*}
where the notation $f(\rho) \sim g(\rho)$ means that $\lim\limits_{|\rho| \to \infty} \dfrac{f(\rho)}{g(\rho)} = 1$.
\end{lem}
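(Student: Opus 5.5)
The plan is to work on the ray $\arg\rho=\pi/6$, which lies in the interior of $\Gamma$. For $|\rho|$ large it is therefore contained in $\Gamma_{h,\rho_*}$, so the Birkhoff system of Lemma~\ref{lem:Birk} and the representation \eqref{expCk} are available there. The key observation is that on this ray the ordering $\mbox{Re}\,(\rho\om_1)<\mbox{Re}\,(\rho\om_2)<\mbox{Re}\,(\rho\om_3)$ holds with gaps of size $c|\rho|$, $c>0$. Hence, in every finite sum of products of the $y_k$, each term whose exponential is not maximal is exponentially small relative to the maximal one. Consequently only one term survives in each of the four asymptotics, and the factor $(1+O(\rho^{-1}))$ from \eqref{asympty} tends to $1$.

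The first step is to express each of the four functions through the Birkhoff system. I write $C(x,\la)=Y(x,\rho)A(\rho)$, where $Y(x,\rho)$ is the matrix of quasi-derivatives $y_k^{[j]}(x,\rho)$. The Cauchy--Binet formula then turns a minor of the matrix $[C_j^{[s]}(1,\la)]$ into a sum over row sets $I$ of (minor of $Y(1,\rho)$ with rows $I$)$\,\times\,$(minor of $A(\rho)$ with rows $I$ and the given columns). The minors of $A=Y(0,\rho)^{-1}$ are handled by Jacobi's complementary-minor identity: the minor of $A$ with rows $I$ and columns $J$ equals $\pm\,a(\rho)$ times the minor of $Y(0,\rho)$ with rows $J^c$ and columns $I^c$. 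By \eqref{asympty} at $x=0$ and \eqref{asympta}, each such minor is an explicit power of $\rho$ times a Vandermonde-type constant in the $\om_k$, multiplied by $a_0\rho^{-3}(1+o(1))$.

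I then treat the four functions in turn.
\begin{itemize}
\item For $\Delta_{2,2}=C_3(1,\la)=\sum_j a_{j,3}y_j(1,\rho)$, the dominant term is $j=3$. Here $a_{3,3}$ is $a(\rho)$ times the $2\times2$ minor of $y_1,y_2$ and their first quasi-derivatives at $0$.
\item For $\Delta_{3,2}=C_2(1,\la)$, the dominant term is again $j=3$. Now $a_{3,2}$ involves the minor built from the rows $y$ and $y^{[2]}$ of $y_1,y_2$, which produces $\rho^2(\om_2^2-\om_1^2)$ and hence the order $\rho^{-1}$.
\item For $\Delta_{1,1}$, the $2\times 2$ determinant of $C_2,C_3$ and their first derivatives at $1$, Cauchy--Binet picks out the row pair $\{2,3\}$. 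This gives the exponential $\exp(\rho(\om_2+\om_3))$, a factor $\rho$ from the Wronskian-type minor, and $a_{\cdot,\cdot}$-minors equal to $a(\rho)y_1(0,\rho)\sim a_0\rho^{-3}$.
\item For $\Delta=D(\rho)a(\rho)$ from \eqref{defD}, I expand $D$ as a determinant. Its dominant term is the product $y_1(0,\rho)\,y_2(1,\rho)\,y_3(2,\rho)$, which equals $\exp(\rho(\om_2+2\om_3))(1+o(1))$, while all other permutations are exponentially smaller.
\end{itemize}

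The main obstacle is not analytic but combinatorial bookkeeping. One must check that in each expansion the claimed term is the unique maximal exponential, and that the corresponding algebraic coefficient does not vanish. One must also track the signs and orderings of rows (for example $C^{[1]}$ before $C$ in \eqref{Deltakk}) together with the signs in Jacobi's identity, so that the constants come out exactly as stated, e.g. $a_0(\om_2-\om_3)$. I would verify these constants against the unperturbed case $p=q=0$, where $y_k=\exp(\rho\om_k x)$ and everything is explicit. Finally, the relative error $O(\rho^{-1})+O(e^{-c|\rho|})\to0$ gives the relation $\sim$ in each case.
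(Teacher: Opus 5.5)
Your proposal is correct and is essentially the paper's proof. The paper likewise writes each function as $a(\rho)$ times a determinant of Birkhoff solutions and keeps the unique dominant exponential on the ray $\arg\rho=\pi/6$. The only difference is bookkeeping: instead of Cauchy--Binet and Jacobi's identity, it adjoins to each minor the rows $C_j^{[s]}(0,\la)$, which are unit vectors. Multiplicativity of $3\times3$ determinants then gives the factor $a(\rho)$ directly, and the sign tracking for minors of $A(\rho)$ disappears.

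One caveat concerns your promise to get the constants ``exactly as stated''. For $\Delta_{2,2}$ your own computation gives the minor $y_1(0,\rho)y_2'(0,\rho)-y_2(0,\rho)y_1'(0,\rho)\sim\rho(\om_2-\om_1)$. Hence $\Delta_{2,2}(\la)\sim a_0(\om_2-\om_1)\rho^{-2}\exp(\rho\om_3)=\frac13\rho^{-2}\exp(\rho)$, which the free case $C_3(1,\rho^3)=\frac{1}{3\rho^2}\sum_k\om_k\exp(\rho\om_k)$ confirms. So the constant $a_0(\om_2-\om_3)$ printed for $\Delta_{2,2}$ is a misprint and cannot be reproduced. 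This is harmless, since later the paper only uses that this constant is non-zero.
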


\begin{proof}
Using \eqref{Deltakk} and \eqref{expCk}, we obtain
\begin{align} \nonumber
\Delta_{1,1}(\la) & = \begin{vmatrix}
    C_1(0, \la) & C_2(0, \la) & C_3(0, \la) \\
    C_1'(1, \la) & C_2'(1, \la) & C_3'(1, \la) \\
    C_1(1, \la) & C_2(1, \la) & C_3(1, \la)
\end{vmatrix} = 
\begin{vmatrix}
    y_1(0, \rho) & y_2(0, \rho) & y_3(0, \rho) \\
    y_1'(1, \rho) & y_2'(1, \rho) & y_3'(1, \rho) \\
    y_1(1, \rho) & y_2(1, \rho) & y_3(1, \rho)
\end{vmatrix} a(\rho), \\ \label{expd}
\Delta_{2,2}(\la) & = \begin{vmatrix}
    C_1(0, \la) & C_2(0, \la) & C_3(0, \la) \\
    C_1'(0, \la) & C_2'(0, \la) & C_3'(0, \la) \\
    C_1(1, \la) & C_2(1, \la) & C_3(1, \la)
\end{vmatrix} = 
\begin{vmatrix}
    y_1(0, \rho) & y_2(0, \rho) & y_3(0, \rho) \\
    y_1'(0, \rho) & y_2'(0, \rho) & y_3'(0, \rho) \\
    y_1(1, \rho) & y_2(1, \rho) & y_3(1, \rho)
\end{vmatrix} a(\rho), \\ \nonumber
-\Delta_{3,2}(\la) & = \begin{vmatrix}
    C_1(0, \la) & C_2(0, \la) & C_3(0, \la) \\
    C_1^{[2]}(0, \la) & C_2^{[2]}(0, \la) & C_3^{[2]}(0, \la) \\
    C_1(1, \la) & C_2(1, \la) & C_3(1, \la)
\end{vmatrix} = 
\begin{vmatrix}
    y_1(0, \rho) & y_2(0, \rho) & y_3(0, \rho) \\
    y_1^{[2]}(0, \rho) & y_2^{[2]}(0, \rho) & y_3^{[2]}(0, \rho) \\
    y_1(1, \rho) & y_2(1, \rho) & y_3(1, \rho)
\end{vmatrix} a(\rho).
\end{align}
Substituting \eqref{asympty} and \eqref{asympta} into the above relations and into \eqref{defD}, we complete the proof.
\end{proof}

\section{Uniqueness} \label{sec:uniq}

This section focuses on the proof of Theorem~\ref{thm:uniq}. The main idea consists in constructing the element $M_{3,2}(\la)$ of the Weyl-Yurko matrix by using the given data $\{ \la_n \}_{n \ge 1}$ and $\{ \mu_n \}_{n \in \mathbb Z_0}$ of Inverse Problem~\ref{ip:1}.

For $k = 1, 2, 3$, denote by $S_k(x, \la)$ the solution of equation \eqref{eqv} under the initial conditions
\begin{equation} \label{initS}
S_k^{[j-1]}(1, \la) = \de_{k,j}, \quad k,j = 1, 2, 3.
\end{equation}

Let $y$ be a solution of \eqref{eqv}, and let $z$ be a solution of the equation
\begin{equation} \label{eqv*}
-z''' - (p z)' - p z' + \mathrm{i} q z = \lambda z, \quad x \in (0, 2).
\end{equation}

For $z$, define the quasi-derivatives similarly to \eqref{quasi} using the functions $f_{k,j}^{\star} = \overline{f_{k,j}}$ instead of $f_{k,j}$. Introduce the Lagrange bracket
$$
\langle z, y \rangle = z^{[2]} y - z' y' + z y^{[2]}.
$$
One can easily check that $\dfrac{d}{dx} \langle z, y \rangle = 0$, that is, the Lagrange bracket $\langle z, y \rangle$ does not depend on $x$.

Now, we are ready to prove the following lemma.

\begin{lem}
The eigenvalues $\{ \mu_n \}_{n \in \mathbb Z_0}$ of the boundary value problem $\mathcal M$ \eqref{eqv}, \eqref{bc2} coincide with the zeros of the characteristic function
\begin{equation} \label{Delta2}
\Delta(\la) = C_2(1, \la) C_3^{\star}(1, \la) + {C_3^{\star}}'(1, \la) C_3(1, \la).
\end{equation}
\end{lem}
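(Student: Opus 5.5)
The plan is to show that the determinant \eqref{Delta1}, whose zeros are exactly the eigenvalues $\{ \mu_n \}_{n \in \mathbb Z_0}$ counted with multiplicities, coincides identically with the right-hand side of \eqref{Delta2}. The two ingredients are the $1$-periodicity of the coefficients, which relates values at $x = 2$ to values at $x = 1$, and the Lagrange bracket, which expresses $C_3^{\star}$ through $2\times 2$ minors of the fundamental matrix at $x = 1$.

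\emph{Step 1: expand \eqref{Delta1} using the initial data at $x=0$.} By \eqref{initC}, $C_k(0,\la) = \de_{k,1}$, so expanding along the first row gives
$$
\Delta(\la) = C_2(1,\la) C_3(2,\la) - C_3(1,\la) C_2(2,\la).
$$

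\emph{Step 2: use periodicity.} I choose the antiderivative $\sigma$ periodic, i.e. take $c = 0$; the paper notes the choice does not affect the arguments. Then $F(x+1) = F(x)$, so $x \mapsto C_k(x+1,\la)$ solves \eqref{eqv} and has the same quasi-derivatives shifted by $1$. Comparing initial data at $x = 0$ yields
$$
C_k(x+1,\la) = \sum_{j=1}^3 C_k^{[j-1]}(1,\la)\, C_j(x,\la),
$$
and evaluating at $x = 1$ gives $C_k(2) = C_k(1)C_1(1) + C_k'(1)C_2(1) + C_k^{[2]}(1)C_3(1)$. Substituting into Step 1, the terms containing $C_1(1)$ cancel, and (all at $x=1$)
$$
\Delta = C_2^2 C_3' - C_2 C_3 C_2' + C_2 C_3 C_3^{[2]} - C_3^2 C_2^{[2]}
= C_2\,(C_2 C_3' - C_3 C_2') + C_3\,(C_2 C_3^{[2]} - C_3 C_2^{[2]}).
$$
Comparing with \eqref{Delta2}, it remains to prove the two identities
$$
C_3^{\star}(1,\la) = (C_2 C_3' - C_3 C_2')(1,\la), \qquad {C_3^{\star}}'(1,\la) = (C_2 C_3^{[2]} - C_3 C_2^{[2]})(1,\la).
$$

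\emph{Step 3: identify $C_3^{\star}$ as an adjoint solution.} Since $p$ and $\sigma$ are real, conjugating \eqref{eqv} with $\la$ replaced by $-\overline{\la}$ shows that $z(x) := C_3^{\star}(x,\la) = \overline{C_3(x,-\overline{\la})}$ solves \eqref{eqv*}. Its quasi-derivatives, built from $f^{\star}_{k,j} = \overline{f_{k,j}}$, are the conjugates of those of $C_3$, so $z(0) = z'(0) = 0$ and $z^{[2]}(0) = 1$.

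\emph{Step 4: the Lagrange bracket.} Evaluating $\langle z, C_k\rangle$ at $x=0$ gives $\de_{k,1}$. Since the bracket is constant in $x$, at $x = 1$ the vector $v := (z^{[2]}(1), -z'(1), z(1))$ satisfies $v \cdot (C_k, C_k', C_k^{[2]})(1) = \de_{k,1}$ for $k = 1,2,3$. Thus $v$ is orthogonal to columns $2$ and $3$ of the Wronski-type matrix $W(1) := [C_k^{[j-1]}(1,\la)]_{j,k=1}^3$. Hence $v$ is proportional to their cross product, and the factor is $1/\det W(1)$. Because $\operatorname{tr} F = 0$, the Liouville formula gives $\det W(x) \equiv \det W(0) = 1$. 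So $v$ is exactly the cross product of the second and third columns, whose third component is $C_2 C_3' - C_3 C_2'$ and whose second component is $C_3 C_2^{[2]} - C_2 C_3^{[2]}$. This yields both identities of Step 2 (the second with the sign coming from the $-z'$ entry of $v$). The bracket identities themselves are routine.

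\emph{Main obstacle.} I expect the main difficulty to be justifying Step 2 with distributional $q$. One must ensure the quasi-derivatives are genuinely shift-invariant, i.e. that the constant $c$ in $\sigma(x+1) = \sigma(x) + c$ can be taken to be $0$ without changing $C_k$ on $[0,2]$, or else track how $c$ enters $y^{[2]}$. If that fails, I would absorb $c$ into a modified change of basis $C_k(x+1) = \sum_j b_{jk} C_j(x)$ and recheck that the resulting expression still collapses to \eqref{Delta2}.
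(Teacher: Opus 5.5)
Your proof is correct, but it is organized differently from the paper's.

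\textbf{How the paper argues.} The paper never expands the $3\times 3$ determinant \eqref{Delta1}. It writes an eigenfunction as a combination of $S_2(x,\la)$ and $S_3(x,\la)$, which are normalized at $x=1$, so the middle condition $y(1)=0$ is built in. It uses periodicity only in the form $S_k(2,\la)=C_k(1,\la)$, $k=2,3$, which gives the $2\times 2$ determinant \eqref{DeltaCS}. It then evaluates the constant brackets $\langle C_3^{\star},S_k\rangle$ at $x=0$ and $x=1$, obtaining $S_3(0,\la)=C_3^{\star}(1,\la)$ and $S_2(0,\la)=-{C_3^{\star}}'(1,\la)$.

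\textbf{How you argue.} You transport everything to $x=1$ through the monodromy relation $C_k(x+1,\la)=\sum_j C_k^{[j-1]}(1,\la)C_j(x,\la)$. You then recognize $C_3^{\star}(1,\la)$ and ${C_3^{\star}}'(1,\la)$ as $2\times 2$ cofactors of $W(1)$, using the brackets $\langle C_3^{\star},C_k\rangle$ and $\det W\equiv 1$.

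\textbf{What each route buys.} Your route gives a genuine identity between the entire functions \eqref{Delta1} and \eqref{Delta2}. The paper only asserts this after the lemma, yet it really uses it: Lemma~\ref{lem:asymptDelta} and Lemma~\ref{lem:D2} are derived from \eqref{defD}, whereas \eqref{sm3} and \eqref{findbe} rest on \eqref{Delta2}. In particular, multiplicities come for free on your route. Your first cofactor identity $C_3^{\star}(1,\la)=(C_2C_3'-C_3C_2')(1,\la)$ is the same fact the paper extracts later from $\Theta$ in \eqref{Theta1}--\eqref{Theta2}. The paper's route is shorter and never sees the junction constant $c$. This is because $S_2(1,\la)=S_3(1,\la)=0$, and changing $c$ only shifts the quasi-derivative $y^{[2]}$ on $(1,2)$ by $\mathrm{i}c\,y$.

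\textbf{On the obstacle you flag.} Taking $c=0$ is permitted by the paper's convention, and nothing depends on that choice anyway. For general $c$, the shifted function $C_k(x+1,\la)$ has quasi-derivative (built with $\sigma$) equal to $C_k^{[2]}(1,\la)-\mathrm{i}c\,C_k(1,\la)$ at $x=0$. Hence $C_k(2,\la)$ picks up the extra term $-\mathrm{i}c\,C_3(1,\la)\,C_k(1,\la)$. This adds a multiple of the row $\bigl(C_2(1,\la),\,C_3(1,\la)\bigr)$ to the row $\bigl(C_2(2,\la),\,C_3(2,\la)\bigr)$, so your $2\times 2$ determinant is unchanged.
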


\begin{proof}
Eigenfunctions of $\mathcal M$ have the form $y_n(x) = b_n S_2(x, \mu_n) + c_n S_3(x, \mu_n)$. Due to \eqref{initC}, \eqref{initS}, and the $1$-periodicity of $p$ and $q$, there holds $S_k(x, \la) = C_k(x - 1, \la)$ for $k = 2, 3$. Thus $y_n(x) = b_n C_2(x - 1, \mu_n) + c_n C_3(x - 1, \mu_n)$. Taking the boundary conditions $y(0) = y(2) = 0$ into account, we conclude that the eigenvalues of $\mathcal M$ coincide with the zeros of the determinant
\begin{equation} \label{DeltaCS}
\Delta(\la) = C_2(1, \la) S_3(0, \la) - S_2(0, \la) C_3(1, \la)
\end{equation}
of the linear system
$$
\begin{cases}
b C_2(1, \la) + c C_3(1, \la) = 0, \\
b S_2(0, \la) + c S_3(0, \la) = 0.
\end{cases}
$$

Next, note that $C_3^{\star}(x, \la)$ is a solution of \eqref{eqv*}. Consequently, the Lagrange brackets $\langle C_3^{\star}(x, \la), S_k(x, \la) \rangle$ ($k = 2, 3$) do not depend on $x$. Using \eqref{initC} and \eqref{initS}, we obtain
\begin{align*} 
& \langle C_3^{\star}, S_3 \rangle \big|_{x = 0} = S_3(0, \la), \quad \langle C_3^{\star}, S_3 \rangle \big|_{x = 1} = C_3^{\star}(1, \la), \\ \nonumber
& \langle C_3^{\star}, S_2 \rangle \big|_{x = 0} = S_2(0, \la), \quad \langle C_3^{\star}, S_2 \rangle \big|_{x = 1} = -{C_3^{\star}}'(1, \la).
\end{align*}
Hence 
\begin{equation} \label{CS3}
S_3(0, \la) = C_3^{\star}(1, \la), \quad S_2(0, \la) = -C_3^{\star}(1, \la), 
\end{equation}
which together with \eqref{DeltaCS} imply \eqref{Delta2}.
\end{proof}

Note that the functions \eqref{Delta1} and \eqref{Delta2} are equal to each other, so we use the same notation $\Delta(\la)$ for them. Indeed, it can be checked that both are equal to the same infinite product \eqref{prod}.

\begin{proof}[Proof of Theorem~\ref{thm:uniq}]
Since $\la_n = \tilde \la_n$ ($n \ge 1$) and $\mu_n = \tilde \mu_n$ ($n \in \mathbb Z_0$), the asymptotics \eqref{asymptmu} and the relations \eqref{prod} imply $p_0 = \tilde p_0$, $\delta(\la) = \tilde \delta(\la)$, and $\Delta(\la) = \tilde \Delta(\la)$. Recalling that $\delta(\la) = C_3(1, \la)$ and $\delta^{\star}(\la) = C_3^{\star}(1, \la)$, we derive from \eqref{Delta2}:
\begin{equation} \label{sm3}
C_2(1, \la) = \frac{\Delta(\la)}{\de^{\star}(\la)} - \frac{{C_3^{\star}}'(1,\la) \de(\la)}{\de^{\star}(\la)}.
\end{equation}

Note that the zeros of $\de^{\star}(\la)$ equal $\{ -\overline{\la_n}\}_{n \ge 1}$. Due to the assumption $\la_n \ne -\overline{\la_k}$ for all $n, k \ge 1$, we have $\de^{\star}(\la_n) \ne 0$. Consequently, it follows from \eqref{sm3} that the difference $(C_2 - \tilde C_2)(1, \la)$ has zeros $\{ \la_n \}_{n \ge 1}$ of (at least) the same multiplicities as $\de(\la)$. Therefore, the function $g(\la) := \dfrac{C_2(1,\la) - \tilde C_2(1, \la)}{\de(\la)}$ is entire in $\la$ of order not greater than $1/3$. 

Using the equalities $p_0 = \tilde p_0$, $C_2(1,\la) = \Delta_{3,2}(\la)$, $\de(\la) = \Delta_{2,2}(\la)$, and Lemma~\ref{lem:asymptDelta}, we obtain the asymptotics 
$$
C_2(1, \rho^3) - \tilde C_2(1, \rho^3) = o\bigl( \rho^{-2} \exp(\rho \om_3) \bigr),
$$
so $g(\rho^3) = o(1)$ as $|\rho| \to \infty$, $\arg \rho = \pi/6$. Proposition~\ref{prop:PL} implies that $g(\la) \equiv 0$, so $C_2(1, \la) = \tilde C_2(1, \la)$. Using \eqref{relMjk}, we get $M_{3,2}(\la) = \tilde M_{3,2}(\la)$. Applying Corollary~\ref{cor:M32} concludes the proof.
\end{proof}

In the case of simple eigenvalues $\{ \la_n \}_{n \ge 1}$, using \eqref{defbe} and \eqref{sm3}, we obtain
\begin{equation} \label{findbe}
\be_n = \frac{\Delta(\la_n)}{\dot \de(\la_n) \de^{\star}(\la_n)}, \quad n \ge 1.
\end{equation}

Thus, by using the relations \eqref{prod} and \eqref{findbe}, Inverse Problem~\ref{ip:1} is reduced to Inverse Problem~\ref{ip:2}.

Developing the technique of this section, we obtain the following auxiliary result.

\begin{lem} \label{lem:reld}
The functions $\de(\la) = C_3(1, \la)$ and $\Delta(\la)$ given by \eqref{Delta2} satisfy \eqref{reld}.
\end{lem}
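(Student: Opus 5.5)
The plan is to reduce \eqref{reld} to a pointwise identity at the zeros $\la_n$ of $\de(\la) = C_3(1,\la)$, using \eqref{Delta2} and the operation $f^{\star}(\la) = \overline{f(-\overline{\la})}$.

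\emph{Reformulation.} Since $\de^3(-\overline{\la_n}) = \overline{(\de^{\star}(\la_n))^3}$ and $\Delta(-\overline{\la_n}) = \overline{\Delta^{\star}(\la_n)}$, relation \eqref{reld} is equivalent to
$$
(\de^{\star}(\la_n))^3 = \Delta(\la_n)\,\Delta^{\star}(\la_n), \quad n \ge 1 .
$$
Because $p$ and $q$ are real, $C_k^{\star}(x,\la)$ solves \eqref{eqv*} and $(C_k^{\star})^{\star} = C_k$. Hence \eqref{Delta2} gives
$$
\Delta^{\star}(\la) = C_2^{\star}(1,\la) C_3(1,\la) + C_3'(1,\la) C_3^{\star}(1,\la).
$$
At $\la = \la_n$ we have $C_3(1,\la_n) = 0$, so $\Delta(\la_n) = C_2(1,\la_n) C_3^{\star}(1,\la_n)$ and $\Delta^{\star}(\la_n) = C_3'(1,\la_n) C_3^{\star}(1,\la_n)$. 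The claim therefore reduces to
$$
C_3^{\star}(1,\la_n) = C_2(1,\la_n)\, C_3'(1,\la_n).
$$
This follows from the more general identity, valid for all $\la$,
\begin{equation*}
C_3^{\star}(1,\la) = C_2(1,\la) C_3'(1,\la) - C_3(1,\la) C_2'(1,\la),
\end{equation*}
whose last term vanishes at $\la_n$.

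\emph{Proof of the general identity.} I would use the Lagrange bracket introduced above. Fix $z = C_3^{\star}(x,\la)$. By \eqref{initC} (with the conjugated coefficients $f^{\star}_{k,j}$), its quasi-derivatives at $x=0$ are $(0,0,1)$, so $\langle z, y\rangle\big|_{x=0} = y(0)$. Therefore
$$
\langle C_3^{\star}, C_1\rangle = 1, \qquad \langle C_3^{\star}, C_2\rangle = \langle C_3^{\star}, C_3\rangle = 0
$$
for all $x$, in particular at $x = 1$. Now set $w := \bigl(z^{[2]}(1), -z'(1), z(1)\bigr)$ and $v_k := \bigl(C_k(1), C_k'(1), C_k^{[2]}(1)\bigr)$. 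Then $w\cdot v_1 = 1$ and $w\cdot v_2 = w\cdot v_3 = 0$, so $w$ is the first row of the inverse of the matrix $[v_1\ v_2\ v_3]$ (the $v_k$ as columns). By Cramer's rule, $w = (v_2\times v_3)/\det[v_1\, v_2\, v_3]$.

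The Wronskian-type determinant $\det[C_k^{[j-1]}(x,\la)]_{j,k=1}^3$ satisfies a Liouville formula with $\operatorname{tr}F = 0$. It therefore equals its value at $x=0$, which is $1$. Reading off the third component of $w = v_2 \times v_3$ gives $C_3^{\star}(1,\la) = C_2(1,\la)C_3'(1,\la) - C_2'(1,\la)C_3(1,\la)$, as claimed.

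\emph{Expected obstacle.} The main care point is the regularized setting. The quasi-derivative system $Y' = F Y$ holds with $F \in L_1$, so Liouville's formula still applies because $Y$ is absolutely continuous. I also need to check that the bracket uses exactly the quasi-derivatives for which constancy was asserted, and that the $\star$-conjugation of the initial conditions is correct, i.e. $C_3^{\star}$ has the same initial data $(0,0,1)$ for \eqref{eqv*}. A possible sign slip in the cofactor should be checked against the earlier computation \eqref{CS3}, which used the same bracket.
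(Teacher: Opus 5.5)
Your proof is correct and follows the paper's structure: evaluating \eqref{Delta2} at the zeros of $\de$ reduces \eqref{reld} to $C_3^{\star}(1,\la_n) = C_2(1,\la_n)\, C_3'(1,\la_n)$, and your use of $\Delta^{\star}(\la_n)$ in place of $\overline{\Delta(-\overline{\la_n})}$ is the same thing. The only difference is how you obtain this identity: you use the brackets $\langle C_3^{\star}, C_k\rangle$, $k=1,2,3$, the unit Wronskian and Cramer's rule, whereas the paper evaluates the $x$-independent determinant $\Theta$ of $C_2, C_3, S_3$ at $x=0$ and $x=1$ and uses $S_3(0,\la) = C_3^{\star}(1,\la)$ from \eqref{CS3}; both rest on the constancy of the Lagrange bracket and of the Wronskian.
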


\begin{proof}
Recall that the eigenvalues $\{ \la_n \}_{n \ge 1}$ of $\mathcal L$ are the zeros of $\de(\la) = C_3(1, \la)$. Therefore, using \eqref{Delta2}, we obtain
$$
\Delta(\la_n) = C_2(1, \la_n) C_3^{\star}(1, \la_n), \quad
\Delta(-\overline{\la_n}) = \overline{C_3'(1, \la_n)} C_3(1, -\overline{\la_n}),
$$
so
\begin{equation} \label{sm1}
\Delta(\la_n) \overline{\Delta(-\overline{\la_n})} = C_2(1, \la_n) C_3'(1, \la_n) \bigl( C_3^{\star}(1, \la_n)\bigr)^2.
\end{equation}

Consider the determinant
\begin{equation} \label{Theta1}
\Theta(\la) = \begin{vmatrix}
C_2(x, \la) & C_3(x, \la) & S_3(x, \la) \\
C_2'(x, \la) & C_3'(x, \la) & S_3'(x, \la) \\
C_2^{[2]}(x, \la) & C_3^{[2]}(x, \la) & S_3^{[2]}(x, \la)
\end{vmatrix} = -\Delta_{1,1}(\la),
\end{equation}
which does not depend on $x$. Using \eqref{initC}, \eqref{initS}, and \eqref{CS3}, we compute
\begin{align} \label{Theta2}
& x = 0 \colon \quad \Theta(\la) = S_3(0, \la) = C_3^{\star}(1, \la), \\ \nonumber
& x = 1 \colon \quad \Theta(\la_n) = \begin{vmatrix}
C_2(1, \la_n) & C_3(1, \la_n) \\ C_2'(1, \la_n) & C_3'(1, \la_n)
\end{vmatrix} = C_2(1, \la_n) C_3'(1, \la_n).
\end{align}
Hence 
\begin{equation} \label{sm2}
C_2(1, \la_n) C_3'(1, \la_n) = C_3^{\star}(1, \la_n) = \overline{\de(-\overline{\la_n})}.
\end{equation}
Substituting \eqref{sm2} into \eqref{sm1}, we obtain
$$
\overline{\de^3(-\overline{\la_n})} = \Delta(\la_n) \overline{\Delta(-\overline{\la_n})}.
$$
The complex conjugation implies \eqref{reld}.
\end{proof}

\section{Existence} \label{sec:exist}

In this section, we prove Theorem~\ref{thm:sc} by reducing Inverse Problem~\ref{ip:1} to Inverse Problem~\ref{ip:2} by the spectral data $\{ \la_n, \be_n \}_{n \ge 1}$. The most challenging parts of the proof are deriving the asymptotics \eqref{asymptbe} for the weight numbers from the asymptotics of $\la_n$ and $\mu_n$
and showing that $\{ \mu_n \}_{n \in \mathbb Z_0}$ are the eigenvalues of the corresponding problem $\mathcal M(p, q)$.

\begin{proof}[Proof of Theorem~\ref{thm:sc}]
Let $\{ \la_n \}_{n \ge 1}$ and $\{ \mu_n \}_{n \in \mathbb Z_0}$ be arbitrary complex numbers satisfying the conditions 1--3 of Theorem~\ref{thm:sc}. Construct the functions $\de(\la)$ and $\Delta(\la)$ by \eqref{prod}. In view of the asymptotics \eqref{asymptla} and \eqref{asymptmu}, the corresponding infinite products converge absolutely and uniformly on compact sets to analytic functions. Construct the numbers $\{ \be_n \}_{n \ge 1}$ by \eqref{findbe}. Our goal is to show that the data $\{ \la_n, \be_n \}_{n \ge 1}$ satisfy the conditions of Proposition~\ref{prop:sc}.

According to the hypothesis of Theorem~\ref{thm:sc}, the zeros of $\de(\la)$ are simple. Since $\mbox{Re} \, \la_n < 0$, then $\la_n \ne -\overline{\la_k}$ for all $n,k \ge 1$, so $\de^{\star}(\la_n) \ne 0$. Furthermore, it follows from \eqref{reld} that $\Delta(\la_n) \ne 0$. Hence $\be_n \ne 0$, $\be_n \ne \infty$, $n \ge 1$. It remains to prove the asymptotics \eqref{asymptbe}.

Put $\tilde p = p_0$, $\tilde q = 0$ and consider the characteristic function $\tilde \Delta(\la)$ of the problem $\tilde{\mathcal M} = \mathcal M(\tilde p, \tilde q)$. In the further discussion, the notation $k_*$ is used for various sufficiently large positive integers and $\{ \eps_n \}$, for various sequences of $l_2$.

\begin{lem} \label{lem:D1}
For $k \ge k_*$, there holds
$$
\Delta(\la_k) = \tilde \Delta(\la_k) \left( 1 + \frac{\eps_k}{k}\right), \quad \{ \eps_k \} \in l_2.
$$
\end{lem}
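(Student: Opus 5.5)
The plan is to compare the two infinite products directly. By \eqref{prod}, both $\Delta$ and $\tilde\Delta$ are products normalized by the same $\mu_n^0$, so
$$
\frac{\Delta(\la)}{\tilde \Delta(\la)} = \prod_{n \in \mathbb Z_0} \frac{\mu_n - \la}{\tilde \mu_n - \la}
= \prod_{n \in \mathbb Z_0} \left( 1 + \frac{\mu_n - \tilde\mu_n}{\tilde \mu_n - \la}\right).
$$
Here $\tilde \mu_n$ are the eigenvalues of $\mathcal M(p_0, 0)$. Since $\tilde p$ is constant, Lemma~\ref{lem:asymptmu} gives $\tilde\mu_n = (\nu n)^3 - 2 p_0 \nu n + \tilde \kappa_n$ with $\tilde\kappa_n$ bounded; in fact $\tilde \kappa_n = O(1)$, by the refined version \eqref{asymptmu+} with $p\in W_2^1$, $q=0$. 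Hypothesis \eqref{asymptmu} then gives
$$
\mu_n - \tilde \mu_n = n \hat\kappa_n, \qquad \{ \ln(|n|)\hat\kappa_n \} \in l_2 .
$$
We evaluate at $\la = \la_k = -(\nu(k+1/6))^3 + O(k)$. The point $\la_k$ lies on the negative real half-axis asymptotically, while $\tilde\mu_n \approx (\nu n)^3$. Hence $|\tilde\mu_n - \la_k| \ge c(|n|^3 + k^3)$ for negative $n$ as well, because $n^3 + (k+1/6)^3$ has no near-cancellation: for $n<0$ we have $|n|^3 - (k+1/6)^3 \ne 0$, and the $1/6$ shift gives a gap of order $k^2$. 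We must keep this negative-$n$ case in mind, since there $\tilde \mu_n - \la_k = \nu^3(-|n|^3 + (k+1/6)^3) + O(k+|n|)$. For $|n|$ close to $k$ this is only of size $\ge c k^2$, roughly $c\,k^2\,\bigl||n|-k-1/6\bigr|$ in general.

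We then take logarithms, which is legitimate since each factor tends to $1$ uniformly for $k \ge k_*$:
$$
\ln \frac{\Delta(\la_k)}{\tilde\Delta(\la_k)} = \sum_{n} \frac{n\hat\kappa_n}{\tilde\mu_n - \la_k} + O\Bigl(\sum_n \frac{n^2|\hat\kappa_n|^2}{|\tilde\mu_n-\la_k|^2}\Bigr).
$$
The goal is to show that the main sum, call it $s_k$, satisfies $\{k s_k\} \in l_2$. Then $\Delta(\la_k)/\tilde\Delta(\la_k) = 1 + \eps_k/k$.

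We split the sum into three ranges.
\begin{itemize}
\item For $n>0$ and for $|n| \le k/2$ or $|n|\ge 2k$, the denominator is $\ge c(|n|^3+k^3)$. The contribution is bounded by $\sum_n |n||\hat\kappa_n|/(|n|^3+k^3)$. By Cauchy--Schwarz this is $O(k^{-3/2}\|\hat\kappa\|)\cdot$const, up to logs, which is more than enough.
\item The critical range is $n<0$ with $|n| \in (k/2, 2k)$. There the term is bounded by $\frac{C}{k}\,\frac{|\hat\kappa_n|}{|\,|n| - k - 1/6|}$. So $k s_k$ is dominated by a discrete Hilbert-type transform $\sum_m |\hat\kappa_{-m}|/|m-k-1/6|$.
\end{itemize}

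The main obstacle is that the discrete Hilbert transform with absolute values is not bounded on $l_2$; it loses exactly a logarithm. This is precisely why the hypothesis requires $\{\ln(|n|)\kappa_n\} \in l_2$. I would handle it by Cauchy--Schwarz on each dyadic block, or by Schur's test. Write $|\hat\kappa_{-m}| = b_m/\ln m$ with $\{b_m\}\in l_2$. Then
$$
\Bigl(\sum_m \frac{b_m}{\ln m\,|m-k-1/6|}\Bigr)^2 \le \sum_m \frac{b_m^2}{|m-k-1/6|}\cdot \sum_{m\sim k}\frac{1}{(\ln m)^2|m-k-1/6|}.
$$
The second factor is $O(\ln k/\ln^2 k) = O(1/\ln k)$. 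Summing the first factor over $k$ gives $\sum_m b_m^2 \sum_{k\sim m} 1/|m-k-1/6| \lesssim \sum_m b_m^2\ln m$. This is not bounded as written, so the split of the weights needs tuning. I would instead put $\ln$-weights symmetrically, using the Schur test with weight $w_m = 1$ on the kernel $K_{k,m} = \frac{1}{\ln m\,|m-k-1/6|}$, and this is where care is needed. The alternative I would try first is to use the exact identity that the signed sum $\sum_m \hat\kappa_{-m}/(m-k-1/6)$ is a genuine discrete Hilbert transform, bounded on $l_2$, keeping signs rather than absolute values. In that version the logarithmic condition covers only the error terms from replacing $\tilde\mu_n - \la_k$ by $\nu^3(k+1/6-m)(k^2+km+m^2)$ and from the quadratic term in the logarithm.
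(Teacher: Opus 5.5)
Your route is the paper's: take the ratio of the two products, linearize the logarithm, use the lower bound $|\tilde\mu_n-\la_k|\ge c\,k^2(|n+k|+1)$ for $n<0$ (the $1/6$ shift prevents resonance), and use the weight $\ln|n|$ to offset the unboundedness on $l_2$ of the absolute-value discrete Hilbert kernel. But neither estimate that is supposed to give $\{k s_k\}\in l_2$ is carried out correctly, so the proof is not complete as written. In the noncritical range, the bound $s_k=O(k^{-3/2})$ is not ``more than enough''. It yields only $k s_k=O(k^{-1/2})$, and $\sum_k k^{-1}=\infty$. You need an $l_2\to l_2$ bound there. For example, the kernel $k|n|/(|n|^3+k^3)$ is homogeneous of degree $-1$, so Schur's test with weights $|n|^{-1/2}$ works (or use Hardy's inequality for $|n|\le k/2$ and the Hilbert-matrix bound for $|n|\ge 2k$). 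Alternatively, putting $|\hat\kappa_n|=b_n/\ln(|n|+1)$ inside Cauchy--Schwarz improves the pointwise bound to $k s_k=O(k^{-1/2}/\ln k)$, which is just square-summable. Also drop your early claim that $|\tilde\mu_n-\la_k|\ge c(|n|^3+k^3)$ for all negative $n$; you contradict it a line later.

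In the critical range, your Cauchy--Schwarz estimate actually closes. You discarded it because you dropped the second factor, $O(1/\ln k)$, when summing over $k$. Since $\ln k\asymp\ln m$ for $m\in(k/2,2k)$,
\[
\sum_k \bigl(k s_k^{\mathrm{crit}}\bigr)^2 \le C\sum_k \frac{1}{\ln k}\sum_{m\in(k/2,2k)}\frac{b_m^2}{|m-k-1/6|}
\le C\sum_m \frac{b_m^2}{\ln m}\sum_{k\in(m/2,2m)}\frac{1}{|m-k-1/6|}\le C\sum_m b_m^2 .
\]
Equivalently, Schur's test with constant weights works for the kernel localized to $m\in(k/2,2k)$, whose row and column sums are $O(\ln k/\ln k)$. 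It fails for the unlocalized kernel, whose row sums diverge like $\sum_m 1/(m\ln m)$. The paper avoids the splitting: it shows $|b_{n,k}|\le \eps_n/\bigl(k\ln(|n|+1)(|n+k|+1)\bigr)$ for all $n\in\mathbb Z_0$ (for $n>0$ via $n^3+k^3\ge kn(n+k)$). It then uses $|\ln(1+b)|\le C|b|$, so no quadratic remainder needs tracking, and the $l_2$-boundedness of the kernel $1/\bigl(\ln(|n|+1)(|n+k|+1)\bigr)$. Your signed-Hilbert-transform alternative is also valid, and stronger. Truncating to $m\in(k/2,2k)$ costs a kernel $O(1/\max(k,m))$, and freezing $km/(k^2+km+m^2)$ at $m=k$ costs an $O(1/k)$ kernel on $m\sim k$. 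The denominator perturbations and the quadratic term are likewise harmless, all using only $\{\hat\kappa_n\}\in l_2$. So along that route this lemma needs no logarithmic hypothesis; the weight $\ln|n|$ is the price of discarding signs, not something the lemma ``precisely'' requires.
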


\begin{proof}
Using \eqref{prod} for $\Delta(\la)$ and $\tilde \Delta(\la)$, we obtain
$$
\Delta(\la) = \tilde \Delta(\la) \prod_{n \in \mathbb Z_0} \left( 1 + \frac{\mu_n - \tilde \mu_n}{\tilde \mu_n - \la}\right),
$$
so
\begin{equation} \label{smD}
\Delta(\la_k) = \tilde \Delta(\la_k) \prod_{n \in \mathbb Z_0} (1 + b_{n,k}), \quad b_{n,k} := \frac{\mu_n - \tilde \mu_n}{\tilde \mu_n - \la_k}.
\end{equation}
By Lemma~\ref{lem:asymptmu}, the sequence $\{ \tilde \mu_n \}_{n \in \mathbb Z_0}$ satisfies the asymptotics \eqref{asymptmu+}. Together with \eqref{asymptmu} for $\{ \mu_n \}_{n \in \mathbb Z_0}$ and \eqref{asympta}, this yields 
$$
|b_{n,k}| \le \frac{\eps_n}{k \ln(|n| + 1) \bigl(|n+k| + 1\bigr)}, \quad \eps_n > 0, \quad \{ \eps_n \} \in l_2,
$$
and $|b_{n,k}| < 1/2$ for $n \in\mathbb Z_0$ and sufficiently large $k \ge k_*$.

Denote 
$$
\xi_k := k \ln \prod_{n \in \mathbb Z_0} (1 + b_{n,k}).
$$
For $k \ge k_*$, we obtain
$$
|\xi_k| \le k \prod_{n \in \mathbb Z_0} |\ln(1 + b_{n,k})| \le C k \sum_{n \in \mathbb Z_0} |b_{n,k}| \le C \sum_{n \in \mathbb Z_0} \frac{\eps_n}{\ln(|n| + 1) \bigl( |n + k| + 1 \bigr)}.
$$

Schur's test implies that the operator $T$ given by $(T a)_k = \sum\limits_{n \in \mathbb Z_0} \dfrac{a_n}{\ln(|n| + 1)\bigl( |n+k| + 1\bigr)}$ is bounded from $l_2(\mathbb Z_0)$ to $l_2(\mathbb Z_0)$ (see \cite[Appendix C]{Bond26}). Hence $\{ \xi_k \}_{k \ge k_*} \in l_2$, so
$$
\prod_{n \in \mathbb Z_0} (1 + b_{n,k}) = \exp\left(\frac{\xi_k}{k} \right) =
1 + \frac{\eps_k}{k}, \quad \{ \eps_k \} \in l_2, \quad k \ge k_*.
$$
This together with \eqref{smD} yield the claim.
\end{proof}

Next, choose any complex numbers $\{ \tilde{\tilde{\be}}_n \}_{n \ge 1}$ such that the data $\{ \la_n, \tilde{\tilde \be}_n \}_{n \ge 1}$ satisfy the conditions~1 and~2 of Proposition~\ref{prop:sc}. For instance, one can put $\tilde{\tilde \be}_n := 3 \la_n$ if $\la_n \ne 0$ and $\tilde{\tilde \be}_n = 1$ otherwise. Then $\{ \la_n, \tilde{\tilde \be}_n \}_{n \ge 1}$ are the spectral data of some problem $\tilde{\tilde{\mathcal L}} = \mathcal L(\tilde{\tilde p}, \tilde{\tilde q})$ with $\tilde{\tilde p} \in L_2(0, 1)$, $\tilde{\tilde q} \in W_2^{-1}(0,1)$. 
Expand $\tilde{\tilde p}$ and $\tilde{\tilde q}$ periodically to $(0, 2)$ and consider the characteristic function $\tilde{\tilde \Delta}(\la)$ of the corresponding problem $\tilde{\tilde{\mathcal M}} = \mathcal M(\tilde{\tilde p}, \tilde{\tilde q})$.

\begin{lem} \label{lem:D2}
For $k \ge k_*$, there holds
$$
\tilde \Delta(\la_k) = \tilde{\tilde \Delta}(\la_k) \left( 1 + \frac{\eps_k}{k} \right), \quad \{ \eps_k \} \in l_2.
$$
\end{lem}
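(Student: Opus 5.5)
The approach is to compare $\tilde\Delta$ and $\tilde{\tilde\Delta}$ directly, both being characteristic functions of genuine problems $\mathcal M(p_0,0)$ and $\mathcal M(\tilde{\tilde p},\tilde{\tilde q})$ with the same mean $\tilde{\tilde p}_0=p_0$. The equality of means holds because $\{\la_n\}$ are the eigenvalues of $\tilde{\tilde{\mathcal L}}$, so by the uniqueness part of Proposition~\ref{prop:sc} (or directly from \eqref{asymptla}) $\int_0^1\tilde{\tilde p}\,dx=p_0$. Unlike Lemma~\ref{lem:D1}, we do not have the weighted condition $\{\ln(|n|)\tilde{\tilde\kappa}_n\}\in l_2$ for the zeros of $\tilde{\tilde\Delta}$; Lemma~\ref{lem:asymptmu} only gives $\{\tilde{\tilde \kappa}_n\}\in l_2$. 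So I will not use the product representation. Instead I will use the Birkhoff representation \eqref{defD}: $\Delta(\la)=D(\rho)a(\rho)$.

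\textbf{Step 1: localize the points.} Write $\la_k=\rho_k^3$ with $\rho_k$ chosen in $\overline\Gamma$. By \eqref{asymptla}, $\la_k$ lies near the negative real axis, so $\rho_k$ lies near the ray $\arg\rho=\pi/3$, i.e.\ $\rho_k\approx\nu(k+1/6)e^{\mathrm i\pi/3}$. Along this ray $\rho\om_1$ is real negative, $\rho\om_3$ is near $e^{\mathrm i\pi/3}$ times a positive number, and $\mathrm{Re}(\rho\om_2)=\mathrm{Re}(\rho\om_3)$. Thus $\rho_k$ lies in the strip $\mathcal S_-$ of Fig.~\ref{img:strips}, inside $\overline\Gamma_{h,\rho_*}$, and $\{\rho_k\}$ is non-condensing.

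\textbf{Step 2: expand the determinant.} Substitute \eqref{asympty} into $D(\rho)$. The dominant terms come from the rows $x=1,2$ paired with $y_2,y_3$, because $\exp(\rho\om_1 x)$ is exponentially small relative to them. This gives
\begin{equation*}
D(\rho)=y_2(0,\rho)\ \text{-free leading form}\ \;\exp(\rho(\om_2+\om_3))\bigl(e^{\rho\om_3}-e^{\rho\om_2}\bigr)\Bigl(1+\tfrac{c\,\tau(1)}{\rho}\Bigr)+\ldots
\end{equation*}
More precisely, I would write
\[
D(\rho)=E(\rho)\bigl(F_0(\rho)+\rho^{-1}\tau(1)F_1(\rho)+\rho^{-1}\eps(\rho)\bigr),
\]
where $E$ is an explicit exponential factor, and $F_0,F_1$ are combinations of $e^{\rho\om_2},e^{\rho\om_3}$ that depend only on $\rho$ and are identical for both coefficient pairs. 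The same applies to $a(\rho)$ via \eqref{asympta}. Since $\tau(1)=-\frac23 p_0$ coincides for the two problems, taking the quotient gives
\[
\frac{\tilde\Delta(\la_k)}{\tilde{\tilde\Delta}(\la_k)}=1+\frac{\eps(\rho_k)}{\rho_k\,F_0(\rho_k)}.
\]
By the definition of $\eps(\rho)$, $\{\eps(\rho_k)\}\in l_2$.

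\textbf{Step 3 (main obstacle): bound $F_0(\rho_k)$ away from zero.} We need $|F_0(\rho_k)|\ge c>0$, i.e.\ $\la_k$ must stay uniformly away from the zeros of the leading term of $\tilde\Delta$. Those zeros are near $\mu_n^0$ on the positive real $\la$-axis, which corresponds to the other ray $\arg\rho=0$ in the closed sector. On the ray $\arg\rho=\pi/3$, $F_0(\rho)$ is essentially $e^{\rho\om_3}-e^{\rho\om_2}$ up to a nonvanishing factor. Here $\rho\om_3-\rho\om_2$ is purely imaginary up to $O(1)$, so $F_0$ oscillates. I would check explicitly that at $\rho=\nu(k+1/6)e^{\mathrm i\pi/3}$ the phase equals $2\pi(k+1/6)\cdot(\text{const})$, which stays a fixed distance from the zeros, using that $\nu\sqrt3=2\pi$. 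If this phase computation instead showed near-cancellation, I would fall back on the counting-lemma argument: show that the zeros of $\tilde\Delta,\tilde{\tilde\Delta}$ are $\mu$-type, near the positive axis, so the leading factor is bounded below on $\mathcal S_-$. In either case, the division by $\rho_k\sim k$ then yields the factor $1+\eps_k/k$ for $k\ge k_*$.
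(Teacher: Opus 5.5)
Your argument is correct and essentially the same as the paper's. Both proofs expand $\tilde\Delta(\la_k)$ and $\tilde{\tilde\Delta}(\la_k)$ via \eqref{defD} at $\rho_k\in\mathcal S_-$, where the leading and $\rho^{-1}$ terms coincide because they involve only $\tau(1)$ and $\tau(2)=2\tau(1)$, that is, only $p_0$. The only difference is that the paper determines $e^{\rho_k(\om_2-\om_3)}$ from the eigenvalue equation $d(\rho_k)=0$ for $\tilde{\tilde{\mathcal L}}$, while you read it off from \eqref{asymptla}. Your phase check does succeed: $\sqrt3\,\nu(k+1/6)=2\pi k+\pi/3$ gives $1-e^{\rho_k(\om_2-\om_3)}\to 1-e^{-\mathrm i\pi/3}=e^{\mathrm i\pi/3}$.

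One correction. The zeros $\mu_n^0=(\nu n)^3$ with $n<0$ are negative, so they correspond to $\rho=\nu|n|e^{\mathrm i\pi/3}$, on the same ray as the $\rho_k$. Hence your fallback, a lower bound for the leading factor on all of $\mathcal S_-$, would be false. The explicit phase computation, i.e. the shift by $1/6$, is exactly what keeps the $\la_k$ away from these zeros.
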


\begin{proof}
According to the asymptotics \eqref{asymptla}, we have $\la_k = \rho_k^3$, where $\rho_k \sim \nu k \exp(\pi \mathrm{i}/3)$ as $k \to \infty$. Consequently, for sufficiently large $k$, the values $\rho_k$ lie in the strip
$$
\mathcal S_- := \bigl\{ \rho \in \mathbb C \colon \rho \exp(-\pi\mathrm{i}/3) \in \mathcal S_+ \bigr\},
$$
where $\mathcal S_+$ is defined in \eqref{strip} (see Fig.~\ref{img:strips}). 
Therefore, using \eqref{asympty}, \eqref{asympta}, and the representation \eqref{defD} for $\tilde \Delta(\la_k)$, we obtain
\begin{align} \nonumber
\tilde \Delta(\la_k) & = \tilde a(\rho_k) \tilde D(\rho_k) = a_0 \rho_k^{-3} 
\begin{vmatrix}
\tilde y_2(1, \rho_k) & \tilde y_3(1, \rho_k) \\
\tilde y_2(2, \rho_k) & \tilde y_3(2, \rho_k) 
\end{vmatrix} 
\left( 1 + \frac{\eps_k}{k} \right) \\ \label{asymptDt}
& = a_0 \rho_k^{-3} \exp(\rho_k (\om_2 + 2 \om_3)) \left(\tilde g_1(\rho_k) - \exp(\rho_k (\om_2 - \om_3)) \tilde g_2(\rho_k) + \frac{\eps_k}{k} \right), \quad
k \ge k_*,
\end{align}
where 
$$
\tilde g_1(\rho) = 1 + \frac{\tilde \tau(1)}{\rho \om_2} + \frac{\tilde \tau(2)}{\rho \om_3}, \quad
\tilde g_2(\rho) = 1 + \frac{\tilde \tau(2)}{\rho \om_2} + \frac{\tilde \tau(1)}{\rho \om_3},
\quad 2 \tilde \tau(1) = \tilde \tau(2) = -\frac{4}{3} p_0.
$$

Recall that $\{ \la_k \}_{k \ge 1}$ are zeros of $\de(\la)$, which is the characteristic function of $\tilde{\tilde{\mathcal L}}$, so it satisfies \eqref{expd}. Hence, for sufficiently large $k$, we have $d(\rho_k) = 0$, where
$$
d(\rho) = \begin{vmatrix}
    y_1(0, \rho) & y_2(0, \rho) & y_3(0, \rho) \\
    y_1'(0, \rho) & y_2'(0, \rho) & y_3'(0, \rho) \\
    y_1(1, \rho) & y_2(1, \rho) & y_3(1, \rho)
\end{vmatrix}
$$
Using \eqref{asympty}, we derive
\begin{equation} \label{eqd}
d(\rho_k) = \rho_k \exp(\rho_k \om_3) \left( r_1(\rho_k) - \exp(\rho_k (\om_2 - \om_3)) r_2(\rho_k) + \frac{\eps_k}{k} \right) = 0, \quad k \ge k_*,
\end{equation}
where 
$$
r_1(\rho) = (\om_2 - \om_1) \left( 1 + \frac{\tau(1)}{\rho \om_3} \right), \quad r_2(\rho) = (\om_3 - \om_1) \left( 1 + \frac{\tau(1)}{\rho \om_3} \right), \quad \tau(1) = -\frac{2}{3} p_0.
$$
Solving equation \eqref{eqd}, we get
\begin{equation} \label{expk}
\exp(\rho_k(\om_2 - \om_3)) = \frac{r_1(\rho_k)}{r_2(\rho_k)} + \frac{\eps_k}{k}, \quad k \ge k_*.
\end{equation}
Substituting \eqref{expk} into \eqref{asymptDt}, we obtain
$$
\tilde \Delta(\la_k) = a_0 \rho_k^{-3} \exp(\rho_k(\om_2 + 2 \om_3)) \left( \tilde g_1(\rho_k) - \tilde g_2(\rho_k) \frac{r_1(\rho_k)}{r_2(\rho_k)} + \frac{\eps_k}{k}\right), \quad k \ge k_*.
$$
The similar asymptotics is valid for $\tilde{\tilde \Delta}(\la_k)$, which yields the claim of the lemma.
\end{proof}

\begin{lem}
The numbers $\{ \be_n \}_{n \ge 1}$ constructed by \eqref{findbe} have the asymptotics \eqref{asymptbe}.
\end{lem}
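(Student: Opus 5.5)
The plan is to compare the numbers $\be_n$ from \eqref{findbe} with the weight numbers $\tilde{\tilde \be}_n$ of the auxiliary problem $\tilde{\tilde{\mathcal L}}$. That problem has the same eigenvalues $\{\la_n\}_{n \ge 1}$, so its characteristic function coincides with $\de(\la)$ (both are given by the product \eqref{prod}). Since $\tilde{\tilde p}, \tilde{\tilde q}$ are real-valued and extended periodically to $(0,2)$, the relation \eqref{findbe}, derived in Section~\ref{sec:uniq} for any such problem, gives
$$
\tilde{\tilde \be}_n = \frac{\tilde{\tilde \Delta}(\la_n)}{\dot \de(\la_n)\, \de^{\star}(\la_n)}, \quad n \ge 1 .
$$
The denominators are the same as in \eqref{findbe}. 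Dividing the two formulas therefore gives, for $n \ge k_*$,
$$
\frac{\be_n}{\tilde{\tilde \be}_n} = \frac{\Delta(\la_n)}{\tilde{\tilde \Delta}(\la_n)} .
$$
Combining Lemma~\ref{lem:D1} with Lemma~\ref{lem:D2} yields
$$
\frac{\be_n}{\tilde{\tilde \be}_n} = \Bigl(1 + \frac{\eps_n}{n}\Bigr)\Bigl(1 + \frac{\eps_n'}{n}\Bigr) = 1 + \frac{\eps_n''}{n}, \qquad \{\eps_n''\} \in l_2 .
$$
Here I must make sure that $\tilde{\tilde \Delta}(\la_n) \ne 0$ for large $n$. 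This follows from the asymptotics obtained at the end of the proof of Lemma~\ref{lem:D2}: the leading coefficient $\tilde g_1 - \tilde g_2 r_1/r_2$ tends to $1 - (\om_2-\om_1)/(\om_3-\om_1) \ne 0$.

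The next step uses the fact that $\{\la_n, \tilde{\tilde \be}_n\}$ are the spectral data of $\tilde{\tilde{\mathcal L}}$, with $\tilde{\tilde p} \in L_2$ and $\tilde{\tilde q} \in W_2^{-1}$. Hence the known asymptotics \eqref{asymptbe} from \cite{Bond23} hold for them:
$$
\tilde{\tilde \be}_n = 3\la_n\Bigl(1 + \frac{\tilde{\tilde\eta}_n}{n}\Bigr), \qquad \{\tilde{\tilde\eta}_n\} \in l_2 .
$$
With the particular choice $\tilde{\tilde \be}_n = 3\la_n$ suggested in the text, this is immediate anyway. Multiplying by the ratio above gives $\be_n = 3\la_n(1 + \eta_n/n)$ with $\{\eta_n\} \in l_2$ for $n \ge k_*$. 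The finitely many remaining indices are absorbed into $\eta_n$. This is possible because $\la_n \ne 0$, which holds since $\mbox{Re}\,\la_n < 0$, so for each such index $\eta_n := n(\be_n/(3\la_n) - 1)$ is a finite number, and finitely many terms do not affect membership in $l_2$.

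The main obstacle is the justification that \eqref{findbe} holds for the model problem $\tilde{\tilde{\mathcal L}}$, together with the identification of its characteristic function with the given product $\de$. The derivation of \eqref{findbe} used \eqref{sm3} and \eqref{defbe}, which require simple zeros of $\de$ and $\de^{\star}(\la_n) \ne 0$. Both hold by hypothesis 1 of Theorem~\ref{thm:sc}. The identification $C_3(1,\la) = \de(\la)$ for $\tilde{\tilde{\mathcal L}}$ uses the recovery of characteristic functions from their zeros by \eqref{prod}, stated just before Lemma~\ref{lem:asymptDelta}. Once these identifications are checked, the lemma reduces entirely to the two preceding lemmas.
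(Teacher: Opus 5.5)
Your proposal is correct and follows essentially the same route as the paper. You write $\tilde{\tilde \be}_n$ via \eqref{findbe} with the same denominator $\dot\de(\la_n)\de^{\star}(\la_n)$, reduce $\be_n/\tilde{\tilde\be}_n$ to $\Delta(\la_n)/\tilde{\tilde\Delta}(\la_n)$, and combine Lemmas~\ref{lem:D1} and~\ref{lem:D2}. The only minor differences: the paper gets $\tilde{\tilde\Delta}(\la_n)\ne 0$ for all $n$ directly from $\tilde{\tilde\be}_n \ne 0$ rather than from the asymptotics, and your explicit handling of the finitely many indices $n < k_*$ fills in a detail the paper leaves implicit.
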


\begin{proof}
Since $\tilde{\tilde{\de}}(\la) \equiv \de(\la)$, then the relation \eqref{findbe} for the weight numbers $\{ \tilde{\tilde \be}_n \}_{n \ge 1}$ of the problem $\tilde{\tilde{\mathcal L}}$ takes the form
$$
\tilde{\tilde \be}_n = \frac{\tilde{\tilde \Delta}(\la_n)}{\dot \de(\la_n) \de^{\star}(\la_n)}, \quad n \ge 1,
$$
where $\tilde{\tilde \be}_n \ne 0$ and $\tilde{\tilde \Delta}(\la_n) \ne 0$.
Hence $\dfrac{\be_n}{\tilde{\tilde \be}_n} = \dfrac{\Delta(\la_n)}{\tilde{\tilde \Delta}(\la_n)}$, $n \ge 1$.
Combining Lemmas~\ref{lem:D1} and~\ref{lem:D2} implies 
$$
\frac{\Delta(\la_n)}{\tilde{\tilde \Delta}(\la_n)} = 1 + \frac{\eps_n}{n}, \quad n \ge 1.
$$
Thus $\be_n = \tilde{\tilde \be}_n \left( 1 + \dfrac{\eps_n}{n}\right)$, which yields \eqref{asymptbe}.
\end{proof}

Continue the proof of Theorem~\ref{thm:sc}.
Applying Proposition~\ref{prop:sc}, we conclude that there exist unique real-valued functions $p \in L_2(0,1)$ and $q \in W_2^{-1}(0,1)$ such that $\{ \la_n, \be_n \}_{n \ge 1}$ are the spectral data of $\mathcal L(p, q)$, and $\int_0^1 p(x) \, dx = p_0$. Expand $p(x)$ and $q(x)$ periodically to $(0, 2)$. 
It remains to show that the initially given numbers $\{ \mu_n \}_{n \in \mathbb Z_0}$ coincide with the eigenvalues of the problem $\mathcal M = \mathcal M(p, q)$. 

Recall that $\Delta(\la)$ is constructed by \eqref{prod} using $\{ \mu_n \}_{n \in \mathbb Z_0}$, and denote by $\Delta^{\diamond}(\la)$ the characteristic function of $\mathcal M(p, q)$ defined by \eqref{Delta2}.

\begin{lem} \label{lem:asymptDelta2}
There holds $\Delta(\la) - \Delta^{\diamond}(\la) = \Delta^{\diamond}(\la) o\bigl( \rho^{-1} \bigr)$ as $|\rho| \to \infty$, $\arg \rho = \pi/6$, $\la = \rho^3$.
\end{lem}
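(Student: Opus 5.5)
Both $\Delta(\la)$ and $\Delta^{\diamond}(\la)$ will be compared with the model function $\tilde \Delta(\la)$ of the problem $\tilde{\mathcal M} = \mathcal M(p_0, 0)$, using the infinite products \eqref{prod}. The function $\Delta$ is given by \eqref{prod} through $\{\mu_n\}$. The function $\Delta^{\diamond}$, being the characteristic function of $\mathcal M(p,q)$ with $p \in L_2$, $q \in W_2^{-1}$, equals the same type of product over its zeros $\{\mu_n^{\diamond}\}$. By Lemma~\ref{lem:asymptmu}, these zeros satisfy \eqref{asymptmu} with the same $p_0 = \int_0^1 p\,dx$ and with $\{\kappa_n^{\diamond}\} \in l_2$. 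The hypothesis gives $\{\kappa_n\}$ with $\{\ln(|n|)\kappa_n\} \in l_2$. I will prove the two estimates
$$
\frac{\Delta(\la)}{\tilde\Delta(\la)} = 1 + o(\rho^{-1}), \qquad \frac{\Delta^{\diamond}(\la)}{\tilde\Delta(\la)} = 1 + o(\rho^{-1}), \qquad \arg\rho = \pi/6,\ |\rho|\to\infty .
$$
Dividing them gives $\Delta/\Delta^{\diamond} = 1 + o(\rho^{-1})$, which is the claim.

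For each estimate I will write, as in the proof of Lemma~\ref{lem:D1},
$$
\frac{\Delta(\la)}{\tilde\Delta(\la)} = \prod_{n\in\mathbb Z_0}\Bigl(1 + \frac{\mu_n - \tilde\mu_n}{\tilde\mu_n - \la}\Bigr),
$$
and similarly with $\mu_n^{\diamond}$. The normalizing constants $\mu_n^0$ cancel, since both products use the same normalization. By \eqref{asymptmu} and \eqref{asymptmu+} for $\tilde\mu_n$ (the latter holds with $q_0 = 0$ and $p_1 = p_0$), we have $|\mu_n - \tilde\mu_n| \le |n|\,|\kappa_n| + C$. Now take $\la = \rho^3$ with $\arg\rho = \pi/6$. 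Then $\la$ lies on the imaginary axis, while the $\tilde\mu_n$ are real and lie near $(\nu n)^3$. Hence
$$
|\tilde\mu_n - \la| \ge c\,(|n|^3 + |\rho|^3) \ge c\,|\rho|^2 (|n| + |\rho|).
$$
It follows that $|b_n(\rho)| \le C\,\frac{|n|\,|\kappa_n| + 1}{|\rho|^2(|n| + |\rho|)}$, and all $|b_n(\rho)| < 1/2$ for large $|\rho|$. Taking logarithms, I need $\rho\sum_n |b_n(\rho)| \to 0$. The constant part contributes $\sum_n \frac{C}{|\rho|(|n| + |\rho|)} = O(|\rho|^{-1}\ln|\rho|) = o(1)$. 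For the $\kappa_n$ part, I split the sum at $|n| = N$. The tail satisfies $\sum_{|n|>N}|\kappa_n|/|\rho| \cdot |n|/(|n|+|\rho|)$; I bound it by Cauchy--Schwarz, or simply note that $\frac{|n|}{|\rho|(|n|+|\rho|)} \le |\rho|^{-1}$. The cleaner route is
$$
\rho\sum_n |b_n| \le C\sum_n |\kappa_n|\,\frac{|n|}{|\rho|(|n|+|\rho|)} \le C\Bigl(\sum_n |\kappa_n|^2\Bigr)^{1/2}\Bigl(\sum_n \frac{n^2}{|\rho|^2(|n|+|\rho|)^2}\Bigr)^{1/2}.
$$
Here the second factor is $O(|\rho|^{-1/2})$, so only $\{\kappa_n\}\in l_2$ is needed. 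The same argument applies to $\mu_n^{\diamond}$. Therefore $\ln(\Delta/\tilde\Delta) = o(\rho^{-1})$, and likewise for $\Delta^{\diamond}$.

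The main obstacle is the uniform lower bound $|\tilde\mu_n - \rho^3| \ge c\,(|n|^3 + |\rho|^3)$ on the ray. This needs the $\tilde\mu_n$ to be (nearly) real and to stay away from the imaginary axis, which holds because $p_0 \in \mathbb R$ and $\tilde q = 0$. Finitely many small $n$ are handled trivially, since each such term is $O(|\rho|^{-3})$. Only the $l_2$ property is used here; the logarithmic weight in the hypothesis was needed for Lemma~\ref{lem:D1}, where the points $\la_k$ lie near the $\tilde\mu_n$, and not for this ray. Lemma~\ref{lem:asymptDelta} gives the nonvanishing of $\tilde\Delta$ and $\Delta^{\diamond}$ on the ray for large $|\rho|$, so the ratios are well defined.
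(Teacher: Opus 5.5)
Your strategy (product representation, logarithm, and an $l_2$/Cauchy--Schwarz bound) is the one the paper uses. The paper compares $\Delta$ with $\Delta^{\diamond}$ directly through $\Pi(\la)=\prod_n(1+p_n(\la))$, using $\mu_n-\mu_n^{\diamond}=n(\kappa_n-\kappa_n^{\diamond})$ with $\{\kappa_n-\kappa_n^{\diamond}\}\in l_2$. Your detour through $\tilde\Delta$ is therefore harmless but buys nothing: the lower bound on $|\mu_n^{\diamond}-\la|$ on the ray follows from \eqref{asymptmu} exactly as for $\tilde\mu_n$.

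\textbf{The gap.} The estimates as you wrote them fail. You start from the correct bound $|\tilde\mu_n-\la|\ge c(|n|^3+|\rho|^3)$ but then weaken it to $c|\rho|^2(|n|+|\rho|)$, which destroys the decay in $n$. Your resulting bound $|b_n|\le C(|n||\kappa_n|+1)/\bigl(|\rho|^2(|n|+|\rho|)\bigr)$ is of size $(|\kappa_n|+|n|^{-1})|\rho|^{-2}$ for $|n|\gg|\rho|$, and this is not summable in $n$. Concretely, both sums you evaluate diverge:
\begin{itemize}
\item $\sum_{n\in\mathbb Z_0}\frac{1}{|\rho|(|n|+|\rho|)}=\infty$, so it is not $O(|\rho|^{-1}\ln|\rho|)$;
\item $\sum_{n\in\mathbb Z_0}\frac{n^2}{|\rho|^2(|n|+|\rho|)^2}=\infty$, because its terms tend to $|\rho|^{-2}$, so it is not $O(|\rho|^{-1})$.
\end{itemize}
As written, you have not even shown that $\sum_n|b_n(\rho)|$ converges, let alone that $\rho\sum_n|b_n(\rho)|\to 0$.

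\textbf{The repair.} Keep the cubic lower bound and use $|n||\rho|\le(|n|+|\rho|)^2$. Write $\mu_n-\tilde\mu_n=n\kappa_n-\tilde\kappa_n$ and set $\eps_n:=|\kappa_n|+|\tilde\kappa_n|\in l_2$; do not replace $\tilde\kappa_n$ by a constant. Then $|b_n(\rho)|\le \frac{C|n|\eps_n}{(|n|+|\rho|)^3}\le\frac{C\eps_n}{|\rho|(|n|+|\rho|)}$, which is precisely the paper's estimate. Cauchy--Schwarz now gives $|\rho|\sum_n|b_n(\rho)|\le C\|\{\eps_n\}\|_{l_2}\bigl(\sum_n(|n|+|\rho|)^{-2}\bigr)^{1/2}=O(|\rho|^{-1/2})$.

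\textbf{A smaller inaccuracy.} The eigenvalues of $\mathcal M(p_0,0)$ need not be real for small indices. For $p_0>\pi^2/2$, take $r_{1,2}=\mathrm{i}(b\pm\pi)$ with $3b^2=2p_0-\pi^2$. Then $e^{r_1x}-e^{r_2x}$ is an eigenfunction with a purely imaginary eigenvalue, which may lie on your ray. This does not hurt the argument. The lower bound only needs $\mbox{Re}\,\tilde\mu_n\asymp|n|^3$ and $\mbox{Im}\,\tilde\mu_n=o(|n|)$ for large $|n|$, both of which follow from \eqref{asymptmu}. The finitely many remaining terms are covered by the remark you already make.
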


\begin{proof}
Using the representation \eqref{prod} for $\Delta(\la)$ and for $\Delta^{\diamond}(\la)$:
$$
\Delta^{\diamond}(\la) = \prod_{n \in \mathbb Z_0} \frac{\mu_n^{\diamond} - \la}{\mu_n^0},
$$
we obtain
$$
\Delta(\la) = \Delta^{\diamond}(\la) \Pi(\la), \quad
\Pi(\la) := \prod_{n \in \mathbb Z_0} (1 + p_n(\la)), \quad
p_n(\la) := \frac{\mu_n - \mu_n^{\diamond}}{\mu_n^{\diamond} - \la},
$$
so
\begin{equation} \label{difD}
\Delta(\la) - \Delta^{\diamond}(\la) = \Delta^{\diamond}(\la) \bigl( \Pi(\la) - 1 \bigr).
\end{equation}

Recall that $\{ \mu_n \}_{n \in \mathbb Z_0}$ satisfy \eqref{asymptmu} with $\{ \ln(|n|)\kappa_n \} \in l_2$ and $\{ \mu_n^{\diamond} \}_{n \in \mathbb Z_0}$ satisfy the similar asymptotics with $\{ \kappa_n^{\diamond} \} \in l_2$ according to Lemma~\ref{lem:asymptmu}. Hence $\{ |n|^{-1} (\mu_n - \mu_n^{\diamond}) \}_{n \in \mathbb Z_0} \in l_2$. Consequently, for $\la = \rho^3$, $\arg \rho = \pi/6$, $|\rho| \ge \rho_*$, we obtain
$$
|p_n(\la)| \le \frac{C \eps_n}{|\rho| \bigl( |n| + |\rho| \bigr)}, \quad \eps_n > 0, \quad \{ \eps_n \} \in l_2, \quad
n \in \mathbb Z_0.
$$
One can easily estimate
\begin{align*}
& |\ln \Pi(\la)| \le \sum_{n \in \mathbb Z_0} \ln |1 + p_n(\la)| \le C \sum_{n \in \mathbb Z_0} |p_n(\la)|, \\
& |\rho \ln \Pi(\la)| \le C \sum_{n \in \mathbb Z_0} \frac{\eps_n}{|n| + |\rho|} \le C |\rho|^{-1/2}.
\end{align*}
Thus, under our assumptions, we have $\ln\Pi(\la) = o\bigl(\rho^{-1}\bigr)$, which together with \eqref{difD} prove the lemma.
\end{proof}

\begin{lem} \label{lem:eqmu}
There holds $\mu_n = \mu_n^{\diamond}$ for all $n \ge 1$.
\end{lem}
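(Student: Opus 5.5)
The plan is to show that $\Delta(\la) \equiv \Delta^{\diamond}(\la)$. Both functions are then given by the infinite product \eqref{prod} with the same normalization $\mu_n^0$, so their zero sets coincide with multiplicities, and hence $\mu_n = \mu_n^{\diamond}$ for all $n \in \mathbb Z_0$. As in the proof of Theorem~\ref{thm:uniq}, I will show that $\Delta - \Delta^{\diamond}$ is divisible by $\de(\la)\de^{\star}(\la)$, and then kill the quotient with Proposition~\ref{prop:PL}, using Lemma~\ref{lem:asymptDelta2} for the growth estimate.

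\emph{Step 1: $\de$ is shared.} The characteristic function $C_3(1,\la)$ of $\mathcal L(p,q)$ has zeros $\{\la_n\}_{n\ge1}$, and by the remark after Lemma~\ref{lem:asymptmu} it is recovered by the product \eqref{prod}. Hence it equals the function $\de(\la)$ built from the data, and consequently $\de^{\star}$ is shared as well.

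\emph{Step 2: agreement at $\la_n$.} Since $\{\la_n,\be_n\}$ are the spectral data of $\mathcal L(p,q)$, relation \eqref{findbe} applied to $\mathcal L(p,q)$ gives $\be_n = \Delta^{\diamond}(\la_n)/(\dot\de(\la_n)\de^{\star}(\la_n))$. On the other hand, $\be_n$ was defined by \eqref{findbe} using $\Delta$. Therefore $\Delta(\la_n) = \Delta^{\diamond}(\la_n)$ for all $n \ge 1$.

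\emph{Step 3: agreement at $-\overline{\la_n}$.} By Lemma~\ref{lem:reld}, $\Delta^{\diamond}$ satisfies \eqref{reld} with the same $\de$, and $\Delta$ satisfies \eqref{reld} by hypothesis~3. Comparing the two relations gives $\overline{\Delta(\la_n)}\,\Delta(-\overline{\la_n}) = \overline{\Delta^{\diamond}(\la_n)}\,\Delta^{\diamond}(-\overline{\la_n})$. Step~2 and $\Delta(\la_n)\neq 0$ (shown earlier from \eqref{reld}) then give $\Delta(-\overline{\la_n}) = \Delta^{\diamond}(-\overline{\la_n})$.

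\emph{Step 4: the quotient is entire.} The zeros of $\de\de^{\star}$ are $\{\la_n\}\cup\{-\overline{\la_n}\}$. These points are all simple: the $\la_n$ are distinct, and $\mbox{Re}\,\la_n<0<\mbox{Re}(-\overline{\la_k})$ separates the two families. Hence $g(\la) := (\Delta(\la)-\Delta^{\diamond}(\la))/(\de(\la)\de^{\star}(\la))$ is entire of order at most $1/3$.

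\emph{Step 5: growth on the ray.} This is the step I expect to need the most care, namely matching exponentials on the ray $\arg\rho=\pi/6$. By \eqref{Theta1}--\eqref{Theta2}, $\de^{\star}(\la) = C_3^{\star}(1,\la) = -\Delta_{1,1}(\la)$, and $\de(\la)=\Delta_{2,2}(\la)$. Lemma~\ref{lem:asymptDelta}, applied to $\mathcal M(p,q)$, then gives
\[
\de(\la)\de^{\star}(\la) \sim c\,\rho^{-4}\exp\bigl(\rho(\om_2+2\om_3)\bigr), \quad c \neq 0,
\qquad
\Delta^{\diamond}(\la)\sim a_0\rho^{-3}\exp\bigl(\rho(\om_2+2\om_3)\bigr).
\]
Here $c \neq 0$ because $\om_2\neq\om_3$. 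So $\Delta^{\diamond}/(\de\de^{\star}) = O(\rho)$ on this ray. Lemma~\ref{lem:asymptDelta2} states $\Delta-\Delta^{\diamond} = \Delta^{\diamond}\,o(\rho^{-1})$, and combining the two bounds gives $g(\rho^3) = o(1)$ as $|\rho|\to\infty$, $\arg\rho=\pi/6$. Proposition~\ref{prop:PL} yields $g\equiv 0$, so $\Delta\equiv\Delta^{\diamond}$ and $\mu_n=\mu_n^{\diamond}$.

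The main point to double-check is that the exponentials cancel exactly: $\om_3 + (\om_2+\om_3)$ equals the exponent $\om_2+2\om_3$ of $\Delta^{\diamond}$. This is precisely why the merely $o(\rho^{-1})$ precision of Lemma~\ref{lem:asymptDelta2}, which relies on the $\ln|n|$ weight in hypothesis~2, suffices to beat the factor $\rho$ coming from $\rho^{-3}/\rho^{-4}$.
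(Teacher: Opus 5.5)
Your proposal is correct and follows the paper's proof essentially step for step. The steps match: agreement at $\la_n$ from \eqref{findbe}, agreement at $-\overline{\la_n}$ from Lemma~\ref{lem:reld} and \eqref{reld}, entireness of $(\Delta-\Delta^{\diamond})/(\de\de^{\star})$, and Proposition~\ref{prop:PL} on the ray $\arg\rho=\pi/6$ via Lemmas~\ref{lem:asymptDelta} and~\ref{lem:asymptDelta2}, with your exponent bookkeeping agreeing with the paper's.

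One minor correction to your closing remark: Lemma~\ref{lem:asymptDelta2} only uses $\{\kappa_n\},\{\kappa_n^{\diamond}\}\in l_2$, so the logarithmic weight in hypothesis~2 is not what makes the $o(\rho^{-1})$ bound available; that weight is needed in Lemma~\ref{lem:D1} for Schur's test.
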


\begin{proof}
The relation 
$$
\be_n = \frac{\Delta^{\diamond}(\la_n)}{\dot \de(\la_n) \de^{\star}(\la_n)}, \quad n \ge 1,
$$
together with \eqref{findbe} implies $\Delta(\la_n) = \Delta^{\diamond}(\la_n)$, $n \ge 1$. By Lemma~\ref{lem:reld}, we have
$$
\de^3(-\overline{\la_n}) = \overline{\Delta^{\diamond}(\la_n)} \Delta^{\diamond}(-\overline{\la_n}), \quad n \ge 1.
$$
Comparing this relation with \eqref{reld} (i.e. the condition~3 of Theorem~\ref{thm:sc}), we conclude that $\Delta(-\overline{\la_n}) = \Delta^{\diamond}(-\overline{\la_n})$, $n \ge 1$. Consequently, the following function is entire in $\la$ of order not greater than $1/3$:
$$
f(\la) = \frac{\Delta(\la) - \Delta^{\diamond}(\la)}{\de(\la) \de^{\star}(\la)}.
$$

In view of \eqref{Theta1} and \eqref{Theta2}, we have $\de^{\star}(\la) = C_3^{\star}(1, \la) = -\Delta_{1,1}(\la)$. So, applying Lemmas~\ref{lem:asymptDelta} and \ref{lem:asymptDelta2}, we obtain 
$$
f(\la) = \frac{\Delta^{\diamond}(\la) o\bigl( \rho^{-1} \bigr)}{\Delta_{1,1}(\la) \Delta_{2,2}(\la)} = o(1), \quad
|\rho| \to \infty, \quad \arg \rho = \frac{\pi}{6}, \quad \la = \rho^3.
$$
By Proposition~\ref{prop:PL}, we conclude that $f(\la) \equiv 0$, so $\Delta(\la) \equiv \Delta^{\diamond}(\la)$ and their zero sets coincide: $\mu_n = \tilde \mu_n^{\diamond}$, $n \ge 1$.
\end{proof}

\begin{remark}
The proof of Lemma~\ref{lem:eqmu} shows that the function $\Delta(\la)$ is uniquely specified by its values $\Delta(\la_n)$ and $\Delta(-\overline{\la_n})$, $n \ge 1$. Note that the values $\Delta(\la_n)$  ($n \ge 1$) are insufficient for the unique interpolation of $\Delta(\la)$. So, the condition \eqref{reld} in Theorem~\ref{thm:sc} is essential and cannot be omitted.
\end{remark}

Thus, the assertion of Theorem~\ref{thm:sc} for the case $p \in L_2$ and $q \in W_2^{-1}$ is proved. If the refined asymptotics \eqref{asymptla+} and \eqref{asymptmu+} are fulfilled, then one can similarly deduce the refined asymptotics \eqref{asymptbe+} for the numbers $\{ \be_n \}_{n \ge 1}$ constructed by \eqref{findbe}. Consequently, Proposition~\ref{prop:sc} yields $p \in W_2^1$, $q \in L_2$ and concludes the proof of Theorem~\ref{thm:sc}.
\end{proof}

\medskip

{\bf Funding.} This work was supported by Grant 24-71-10003 of the Russian Science Foundation, https://rscf.ru/en/project/24-71-10003/.

\end{document}